\documentclass[12pt]{article}
\usepackage{amsmath}
\usepackage{amssymb}
\usepackage{amsfonts}
\usepackage{latexsym}
\usepackage{color}
\usepackage{graphicx}

\catcode `\@=11 \@addtoreset{equation}{section}

\catcode `\@=12


  \voffset1cm

\newcommand{\be}{\begin{equation}}
\newcommand{\en}{\end{equation}}
\newcommand{\bea}{\begin{eqnarray}}
\newcommand{\ena}{\end{eqnarray}}
\newcommand{\beano}{\begin{eqnarray*}}
\newcommand{\enano}{\end{eqnarray*}}
\newcommand{\bee}{\begin{enumerate}}
\newcommand{\ene}{\end{enumerate}}

\newcommand{\Hil}{{\cal H}}

\newcommand{\F}{{\cal F}}

\newcommand{\Sc}{{\cal S}}
\newcommand{\D}{{\cal D}}

\newcommand{\M}{{\cal M}}

\newtheorem{thm}{Theorem}
\newtheorem{example}[thm]{Example}
\newtheorem{cor}[thm]{Corollary}

\newtheorem{prop}[thm]{Proposition}
\newtheorem{rem}[thm]{Remark}

\newenvironment{proof}{\noindent {\bf Proof:}}{\hfill$\Box$}
\textwidth16.5cm \textheight22cm
\hoffset-1cm \voffset-1cm

\begin{document}
\thispagestyle{empty}

\vspace*{1cm}

\begin{center}
{\Large \bf Operators and POVMs generated by Parseval frames} 

\vspace{4mm}
{\large S. Ku\.{z}el${}^{1}$ \quad and \quad P. {\L}ukasiewicz${}^{1}$}   \\
\vspace*{1cm}
\normalsize
\vspace*{.5cm}
${}^1$ AGH University, Krak\'ow, Faculty of Applied Mathematics, Poland.\\

\end{center}

\vspace*{0.5cm}

\begin{abstract}
Let $\F_e$ be a Parseval frame in a Hilbert space 
$\Hil$ and  let 
${\sf E}$ be a set of real numbers. From these data, we construct an operator 
$H_{{\sf E},e}$ and a positive operator-valued measure (POVM) $F_{{\sf E},e}$ This paper investigates in detail the relationship between the operator 
$H_{{\sf E},e}$ and the POVM 
$F_{{\sf E},e}$. Our results extend the classical correspondence between a self-adjoint operator generated by an orthonormal basis and its associated projection-valued (spectral) measure.
\end{abstract}

{\bf 2020 MSC}:  46N50, 47N50, 46B15, 42C15

\vfill
\section{Introduction}\label{s1}

As is well known, in quantum mechanics the dynamics of a closed system $\Sc$ is often determined by a self-adjoint operator, the Hamiltonian $H$ of the system $\Sc$.  A crucial feature of $H$, which plays a significant role in analyzing
$\Sc$, is its spectrum of eigenvalues and the corresponding eigenvectors. If
$H$ has a purely discrete spectrum $\sigma(H)=\{E_j\}_{j\in\mathbb{J}}$, where  $\mathbb{J}\subseteq\mathbb{Z}$ is an index set, then the eigenvectors corresponding to the distinct eigenvalues are mutually orthogonal. In this case, there exists an orthonormal basis $\F_{e^+}=\{e_j^+\}_{j\in\mathbb{J}}$ of the Hilbert space $\Hil^+$
consisting of eigenvectors of $H$. Therefore, any physical state of $\Sc$ can be represented as a linear combination of basic elements $e_j^+$. With respect to this basis, the operator $H$
 admits the spectral representation 
$$
H{\equiv}H_{{\sf E},e^+}=\sum_{j\in\mathbb{J}}{E}_j \langle \cdot,  e_j^+ \rangle{e}_j^+, \qquad {\sf E}=\{E_j\}_{j\in\mathbb{J}}
$$
with domain
$$\mathcal{D}(H_{{\sf E}, e^+})=\{f\in\Hil^+ \ : \ \sum_{j\in\mathbb{J}}{E}_j^2 |\langle f, e_j^+\rangle|^2<\infty\}.
$$

Alternatively, the system $\Sc$ can be described using the projection-valued measure 
$$
F_ {{\sf E}, e^+}(\Delta)=\sum_{j\in\mathbb{J}_\Delta}\langle \cdot, e_j^+\rangle e_j^+, \qquad \mathbb{J}_\Delta=\{j\in\mathbb{J} : E_j\in\Delta\}, \quad \Delta\in\mathcal{B}(\mathbb{R}), 
$$
where $\mathcal{B}(\mathbb{R})$ is the $\sigma$ -algebra of Borel set on $\mathbb{R}$. 

The relationship between $H_{{\sf E},e^+}$ and $F_ {{\sf E}, e^+}$
is remarkably simple and transparent 
\begin{equation}\label{ups11}
H_{{\sf E},e^+}=\int_{\mathbb{R}}\lambda{d} F_ {{\sf E}, e^+}(d\lambda), \qquad \mathcal{D}(H_{{\sf E}, e^+})=\{ f\in\Hil^+ \ : \ \int_{\mathbb{R}}\lambda^2d\langle F_ {{\sf E}, e^+}(d\lambda)f, f \rangle<\infty\}.
\end{equation}

In the analysis of a concrete physical situation, it may occur that \emph{not all} vectors of the  Hilbert space  $\Hil^+$ are  \emph{physically relevant} to describe the system  $\Sc$. 
This happens, for example, when the energy of $\Sc$ cannot realistically become too large, or when $\Sc$ is localized in a bounded spatial region, or when its momentum is constrained within certain limits. In such cases - and others like them - it is natural to introduce a \emph{physical state space}  $\Hil$,  defined as a linear subspace of the \emph{mathematical state space} $\Hil^+$ on which $\Sc$ is originally formulated.  This is exactly the point of view in \cite{jpg}, just to cite one, where $\Hil$ contains the functions of $\Hil^+=L_2(\mathbb{R})$ which are zero outside a certain compact subset $\D$ of $\mathbb{R}$. Depending on the interpretation of $\Hil^+$, this approach can be used to describe particles located in $\D$, or particles with a bounded momentum.

 In general, the physical Hilbert space $\Hil$ can be constructed as a projection of $\Hil^+$, through some suitable orthogonal projector operator $P^+$, i.e., $\Hil=P^+\Hil^+$.  Then, the restriction of $H_{{\sf E},e^+}$ in $\Hil$ leads to the operator 
\begin{equation}\label{uman63r}
H_{{\sf E},e}=P^+H_{{\sf E},e^+}=\sum_{j\in\mathbb{J}}{E}_j \langle \cdot,  e_j \rangle{e}_j, 
\end{equation}
 where $\F_e=\{e_j=P^+e_j^+\}_{j\in\mathbb{J}}$ is a Parseval frame in $\Hil$ (see Sect. \ref{sII} for definition).

 The operator $H_{{\sf E},e}$, as defined in \eqref{uman63r} with an appropriately chosen domain, can be viewed as  \emph{a Hamiltonian  generated by a Parseval frame  $\F_e$ and the set of real numbers ${\sf E}$}. Operators of this type have been studied in \cite{BK1, BK2}, and arise as a particular instance of the broader framework of multipliers \cite{CORSO, BA2}. 
 
For finite Parseval frames, the operators $H_{{\sf E},e}$
 can be interpreted as arising from the Klauder–Berezin–Toeplitz-type quantization \cite{Gaz3, Kla} of a real-valued function $f : \mathcal{X} \to \mathbb{R}$, defined on a discrete set of data 
$\mathcal{X}=\{a_j\}_{j\in\mathbb{J}}$
 associated with a physical system. The eigenvalues of 
$H_{{\sf E}, e}$ then constitute the \emph{quantum spectrum} of the classical observable $f$, while its \emph{classical spectrum} is given by the set of values
${\sf E}=\{E_j=f(a_j)\}_{j\in\mathbb{J}}$
as described in \cite{Cot1, Cot2}. These results can be generalized
to the case of infinite Parseval frames with the use of POVM quantization developed in \cite{Gaz1}.

Similarly to the case of operators  $H_{{\sf E}, e^+} \to H_{{\sf E}, e}$, the restriction of $\Hil^+$ to  $\Hil$ transforms the projection-valued measure $F_{{\sf E}, e^+}$
into a positive operator-valued measure $F_{{\sf E}, e}=P^+F_{{\sf E}, e^+}: \mathcal{B}(\mathbb{R}) \to \mathcal{L}^+(\Hil)$ defined by the formula 
$$
F_ {{\sf E}, e}(\Delta)=\sum_{j\in\mathbb{J}_\Delta}\langle \cdot, e_j\rangle e_j,  \qquad \mathbb{J}_\Delta=\{j\in\mathbb{J} : E_j\in\Delta\}, \quad \Delta\in\mathcal{B}(\mathbb{R}),
$$ 
where $\mathcal{L}^+(\mathcal{H})$ is the set of positive\footnote{An operator $L$ acting in a Hilbert space $\mathcal{H}$ with a scalar product $\langle\cdot, \cdot\rangle$ is called \emph{positive} if $\langle Lf,f\rangle \geq{0}$} bounded operators acting in $\Hil$.

In contrast, to the relation \eqref{ups11} between a self-adjoint operator $H_{{\sf E},e^+}$ and the corresponding projection valued measure $F_ {{\sf E}, e^+}$, the relationship between 
$
F_ {{\sf E}, e}$  and $
H_{{\sf E},e}$
is much more complicated, and its  
investigation 
is \emph{the main objective of this paper}. 

After presenting the basic elements of frame theory and POVM theory in Sect. \ref{sII}, we focus on the POVMs 
$F_ {{\sf E}, e}$, generated by Parseval frames. These POVMs constitute particular cases of framed POVMs  \cite{MHC}. The close relationship between POVMs and frames allows us to draw on the rich existing theory of POVMs to gain new perspectives on frame theory. In particular, this approach yields a new proof of the fundamental inequality for Parseval frames, presented in Sect. \ref{sIII.1}.
   
 In Sect. \ref{sIII.2}, commutative POVMs 
$F_ {{\sf E}, e}$ are completely characterized under the assumption that the set ${\sf E}$
consists of different elements. Theorem \ref{uuu4} shows that frames of this type have a relatively simple structure, based on an orthonormal basis of $\Hil$. This orthonormal basis makes it possible to explicitly construct sharp versions \cite{RB}-\cite{RB2} of a commutative POVM $F_{{\sf E}, e}$ in Corollary \ref{uuu7}. In Sect. \ref{sIII.3}, the joint measurability of two POVMs $F_{{\sf E}, e}$ and $F_{\widehat{\sf E}, \widehat{e}}$ 
is discussed based on the results of
\cite{RB}.

Sect. \ref{sIV} opens with a discussion of how one can construct an operator $H$
that maximally reflects the properties of a POVM $F$. We employ the Naimark method to construct a symmetric operator $H$ associated with a POVM $F$ \cite{AG, Nai}, and we discuss how its domain can be extended in an optimal manner to obtain a self-adjoint operator. This line of argument, for the case where $F=F_{{\sf E}, e}$ leads to an operator 
$H_{{\sf E}, e}$, defined by \eqref{uman63r}, and equipped with one of two possible domains, namely \eqref{busa4} or \eqref{busa4b}.
Theorem \ref{duda1} demonstrates that, for a commutative POVM $F_{{\sf E}, e}$, the associated operator $H_{{\sf E}, e}$ with domain \eqref{busa4b} is a self-adjoint operator.

In Sect. \ref{sIV.2}, we analyze the relationship between the spectrum of POVM $F_{{\sf E}, e}$ 
 and  the spectrum of $H_{{\sf E}, e}$. 
We note that the spectrum of $F_{{\sf E}, e}$
coincides with the closure of 
${\sf E}$ and can be interpreted as the classical spectrum of an observable 
$f$, while the spectrum of 
$H_{{\sf E}, e}$ can be regarded as its quantum spectrum,
 as proposed in \cite{Cot1, Cot2}.
 
 In the case of a commutative POVM based on a Parseval frame 
$\F_e$ with finite excess ${\bf e}[\F_e]=n$, the eigenvalues of 
$H_{{\sf E}, e}$ coincide with 
 ${\sf E}$ up to 
$n$ values. The situation becomes considerably more interesting when 
${\bf e}[\F_e]=\infty$. In this case, the set ${\sf E}$ can be transformed into $\sigma(H_{{\sf E}, e})$
in a wide variety of ways (see \ref{eigen12}), using an appropriate choice of a Parseval frame $\F_e$.
 
In the case of non-commutative POVMs based on Parseval frames that possess additional properties relevant for applications, one may also expect predictable relationships between 
${\sf E}$ and $\sigma(H_{{\sf E}, e})$.
In Sect. \ref{sIV.3}, we confirm this hypothesis for Parseval frames associated with specific classes of optimal Grassmannian frames, including Mercedes-type Parseval frames and frames derived from conference matrices.

\section{Preliminary definitions and results}\label{sII}
In the following, we present the necessary facts about frames and POVMs in a form that is convenient for our exposition. More details can be found in \cite{chri, Heil, MHC, SH}.

\subsection{Frames}\label{sII.1}
 
Let $\mathcal{H}$ be a separable Hilbert space with linear scalar product $\langle \cdot, \cdot \rangle$ in the first argument.   

\emph{A frame}  is a family of vectors 
$\F_e=\{e_j\}_{j\in\mathbb{J}}$  in $\mathcal{H}$  for which 
$$
A||f||^2\leq\sum_{j\in\mathbb{J}}{|\langle f, e_j \rangle|^2}\leq{B}||f||^2, \qquad f\in \mathcal{H}, \quad 0<A\leq{B}.
$$

A frame 
$\F_e$
 is said to be \emph{$A$-tight} if $A=B$, and \emph{normalized} if 
$\|e_j\|=1 $ for $j\in\mathbb{J}$. For each $A$-tight frame $\F_e$,
\begin{equation}\label{auau1}
f=\frac{1}{A}\sum_{j\in\mathbb{J}}\langle f, e_j\rangle{e_j}, \qquad f\in\Hil.
\end{equation}
The coefficients in \eqref{auau1} are not uniquely determined and there exist alternative representations $f=\frac{1}{A}\sum_{j\in\mathbb{J}}{c_j}{e_j}$ where the coefficients $c_j$ differ from those of \eqref{auau1}. The range of available coefficients 
can be evaluated using the concept of frame excess.  We recall that the \emph{excess} ${\bf e}[\F_e]$ of $\F_e$ is the highest integer $n$ such that  $n$ elements  can be deleted from $\F_e$ and still leave a complete set, or $\infty$ if there is no upper bound to the number of elements that can be removed. The zero excess of a $A$-tight frame $\F_{e}$ means that $\F_e$ is a Riesz basis  and the coefficients in \eqref{auau1} are uniquely determined.

An $1$-tight frame $\F_e$ is called \emph{a Parseval frame}.  Every $A$-tight
 frame $\F_{e'}=\{e_j'\}_{j\in\mathbb{J}}$ can be rescaled to obtain a Parseval frame $\F_{e}=\{e_j\}_{j\in\mathbb{J}}$ by setting $e_j' \to e_j=\frac{1}{\sqrt{A}}e_j'$.

The Naimark dilation theorem is stated for Parseval frames in the following form \cite{hanlar}.  
\begin{thm}\label{Naimark}
Let $\F_e=\{e_j\}_{j\in\mathbb{J}}$ be a Parseval frame in a Hilbert space $\mathcal{H}$. 
Then there exist a Hilbert space $\M$ and a Parseval frame
$\F_{m}=\{m_j\}_{j\in\mathbb{J}}$ in $\M$ such that the set of vectors
\begin{equation}\label{ups1}
\F_{e^+}=\{e_j^+=e_j\oplus{m}_j\}_{j\in\mathbb{J}}
\end{equation}
is an orthonormal basis of $\Hil^+=\mathcal{H}\oplus\M$. The excess ${\bf e}[\F_e]$ of $\F_e$ coincides with the dimension of the Hilbert space $\M$, while the excess ${\bf e}[\F_m]$ of $\F_m$ coincides with $\dim\Hil$. 
\end{thm}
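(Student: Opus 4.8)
The plan is to prove the Naimark dilation theorem for Parseval frames by constructing the dilation space $\M$ explicitly as a closure of coefficient sequences, and then checking that the ambient family $\F_{e^+}$ is orthonormal and complete by a direct computation with the analysis operator. First I would introduce the analysis operator $\Theta:\Hil\to\ell_2(\mathbb{J})$, $\Theta f=\{\langle f,e_j\rangle\}_{j\in\mathbb{J}}$; since $\F_e$ is Parseval, $\Theta$ is an isometry, so $\Theta\Hil$ is a closed subspace of $\ell_2(\mathbb{J})$ and $\Theta^*\Theta=\Id_\Hil$, with $P:=\Theta\Theta^*$ the orthogonal projection of $\ell_2(\mathbb{J})$ onto $\Theta\Hil$. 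The natural choice is $\M:=(\Theta\Hil)^\perp=(\Id-P)\ell_2(\mathbb{J})\subseteq\ell_2(\mathbb{J})$, with $m_j:=(\Id-P)\delta_j$ where $\{\delta_j\}$ is the standard orthonormal basis of $\ell_2(\mathbb{J})$. Then $\Hil^+:=\Hil\oplus\M$ can be identified with $\Theta\Hil\oplus(\Theta\Hil)^\perp=\ell_2(\mathbb{J})$ via the unitary $U:f\oplus g\mapsto \Theta f+g$, and under this identification $e_j^+=e_j\oplus m_j$ corresponds to $\Theta e_j+(\Id-P)\delta_j = P\delta_j+(\Id-P)\delta_j=\delta_j$, using the reproducing identity $P\delta_j=\Theta e_j$ (equivalently $\langle\Theta e_i,\delta_j\rangle=\langle e_j,e_i\rangle=\langle\Theta e_i,\Theta e_j\rangle$). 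Hence $\{e_j^+\}$ is unitarily equivalent to $\{\delta_j\}$, which is an orthonormal basis of $\ell_2(\mathbb{J})$, giving \eqref{ups1}.

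Next I would verify that $\F_m=\{m_j\}$ is itself a Parseval frame in $\M$: for $g\in\M$ one has $\sum_j|\langle g,m_j\rangle|^2=\sum_j|\langle g,(\Id-P)\delta_j\rangle|^2=\sum_j|\langle g,\delta_j\rangle|^2=\|g\|^2$ since $g\in\M$ means $(\Id-P)g=g$; completeness of $\F_m$ in $\M$ follows because the closed span of $\{m_j\}$ is $(\Id-P)\overline{\mathrm{span}}\{\delta_j\}=(\Id-P)\ell_2(\mathbb{J})=\M$. The same computation re-read on the $\Hil$ side re-confirms that $\F_e$ is Parseval. Alternatively — and this is perhaps cleaner for the paper — one can avoid the explicit sequence-space model: start from any orthonormal basis $\{e_j^+\}$ of an abstract $\Hil^+$ containing $\Hil$ as a subspace with orthogonal projection $P^+$ such that $P^+e_j^+=e_j$ (such $\Hil^+$ exists precisely because $\F_e$ is Parseval, by the converse direction, which is the argument just given), set $\M:=(\Hil)^\perp$ in $\Hil^+$ and $m_j:=(\Id-P^+)e_j^+$, and then orthonormality of $\{e_j^+\}$ plus $e_j^+=e_j\oplus m_j$ immediately forces $\F_m$ to be Parseval in $\M$ by the same one-line Pythagorean identity.

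For the excess statement, I would argue as follows. The excess ${\bf e}[\F_e]$ equals $\dim\ker\Theta^*$ — this is a standard fact (the number of vectors removable while staying complete equals the dimension of the space of nonzero sequences $\{c_j\}\in\ell_2$ with $\sum_j c_j e_j=0$, i.e. $\ker\Theta^*$), but if it is not quoted earlier I would include the short proof: a subfamily $\{e_j\}_{j\notin S}$ is complete iff no nonzero sequence supported off $S$ lies in $\ker\Theta^*$, and a dimension/linear-algebra count over finite $S$ (with the $\infty$ case handled by monotonicity) yields ${\bf e}[\F_e]=\dim\ker\Theta^*$. Since $\ker\Theta^*=(\operatorname{ran}\Theta)^\perp=(\Theta\Hil)^\perp=\M$, we get ${\bf e}[\F_e]=\dim\M$. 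The claim ${\bf e}[\F_m]=\dim\Hil$ is then the same statement applied to the Parseval frame $\F_m$ in $\M$: its excess equals the dimension of the orthogonal complement of its analysis-operator range, and that complement is exactly $\Theta\Hil\cong\Hil$ inside $\ell_2(\mathbb{J})$, since $\delta_j=\Theta e_j\oplus m_j$ exhibits $\ell_2(\mathbb{J})=\Theta\Hil\oplus\M$ with the two frames playing symmetric roles.

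The main obstacle I anticipate is purely a matter of care rather than depth: pinning down the precise sense in which "excess equals the codimension of the synthesis range" when $\mathbb{J}$ is infinite and $\dim\M=\infty$, so that the equality ${\bf e}[\F_e]=\dim\M$ is read correctly as an equality of cardinals (or of the symbol $\infty$) and the removal procedure is justified by a transfinite/exhaustion argument rather than a naive finite count. Everything else — the isometry property of $\Theta$, the reproducing identity $P\delta_j=\Theta e_j$, the unitary identification $\Hil\oplus\M\cong\ell_2(\mathbb{J})$, and the Pythagorean one-liner for the Parseval property of $\F_m$ — is routine, and I would present it compactly.
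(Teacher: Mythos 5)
Your construction of the dilation is correct and is essentially the standard Han--Larson argument that the paper itself relies on: Theorem \ref{Naimark} is stated without proof and attributed to \cite{hanlar}, and the $l_2(\mathbb{J})$ model you build --- the analysis operator $\theta_e$ as an isometry, $\M=(\theta_e\Hil)^\perp$, $m_j=(\Id-P)\delta_j$ with $P=\theta_e\theta_e^*$, the identification $e_j\oplus m_j\leftrightarrow P\delta_j+(\Id-P)\delta_j=\delta_j$, and the Pythagorean one-liner showing $\F_m$ is Parseval in $\M$ --- is exactly that proof. For the first two assertions of the theorem there is nothing to object to.

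The excess claim is where your write-up contains a step that would fail as written. The identity ${\bf e}[\F_e]=\dim\ker\theta_e^*$ is indeed a known theorem (Balan--Casazza--Heil--Landau) and quoting it would be legitimate, but the criterion you offer in its support --- that $\{e_j\}_{j\notin S}$ is complete iff no nonzero sequence supported off $S$ lies in $\ker\theta_e^*$ --- is false: for $\Hil=\mathbb{C}$, $e_1=e_2=1/\sqrt{2}$ and $S=\{1,2\}$ there is no nonzero sequence supported off $S$ at all, yet the empty subfamily is not complete. The correct criterion is dual to the one you wrote. A vector $f$ is orthogonal to every $e_j$ with $j\notin S$ exactly when $\theta_e f$ is supported on $S$, so $\{e_j\}_{j\notin S}$ is complete iff $\mathcal{R}(\theta_e)\cap l_2(S)=\{0\}$, where $l_2(S)$ denotes the sequences supported on the finite set $S$; equivalently, iff the vectors $\{m_j\}_{j\in S}$ are linearly independent in $\M$. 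Since $\{m_j\}_{j\in\mathbb{J}}$ has dense span in $\M$, the largest cardinality of such an $S$ is precisely $\dim\M$ (with the value $\infty$ handled by noting that a family with dense span in an infinite-dimensional space contains arbitrarily large linearly independent subsets), which yields ${\bf e}[\F_e]=\dim\M$. The statement ${\bf e}[\F_m]=\dim\Hil$ then follows by exchanging the roles of $\mathcal{R}(\theta_e)\cong\Hil$ and $\M$ in the decomposition $l_2(\mathbb{J})=\mathcal{R}(\theta_e)\oplus\M$, exactly as you intend. With this local repair the proof is complete.
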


It is also straightforward to prove an inverse statement: given an orhonormal basis 
$\F_{e^+}$ of $\Hil^+$ and an orthogonal projection $P^+$ in $\Hil^+$, the set
$P^+\F_{e^+}=\{e_j=P^+e_j^+\}_{j\in\mathbb{J}}$ forms a Parseval frame in $\mathcal{H}=P^+\mathcal{H}^+$. 

A normalized frame $\F_e=\{e_j\}_{j\in\mathbb{J}}$ acting in a finite-dimensional Hilbert space $\Hil$ is called \emph{equiangular} if there exists a constant $\alpha$ such that 
$
\alpha=|\langle e_j, e_i \rangle|, \ j\not=i\in\mathbb{J}.
$
The constant $\alpha$ is called \emph{the coherence constant} and measures how close an equiangular frame $\F_e$ is to being orthogonal.
 
Every equiangular 
$A$-tight frame 
$\F_e$ is referred to as  \emph{an optimal Grassmannian frame}. Optimal Grassmannian frames $\F_e=\{e_j\}_{j\in\mathbb{J}}$ exist only for certain combinations of the parameters  $|\mathbb{J}|$ and ${\bf e}[\F_e]$. 
More precisely, in complex (respectively, real) Hilbert spaces, the existence of such frames requires that $(|\mathbb{J}|-{\bf e}[\F_e])^2\geq|\mathbb{J}|+{\bf e}[\F_e]$ (respectively, $(|\mathbb{J}|-{\bf e}[\F_e])^2\geq|\mathbb{J}|$). In both cases, the coherence constant $\alpha$ coincides with 
$\sqrt{\frac{{\bf e}[\F_e]}{(|\mathbb{J}|-{\bf e}[\F_e])(|\mathbb{J}|-1)}}$, and $\F_e$ is a 
$A$-tight frame with frame bound 
$A=\frac{|\mathbb{J}|}{|\mathbb{J}|-{\bf e}[\F_e]}$.

\subsection{Positive operator valued measure}
 \emph{A normalized positive operator-valued measure} (for short, a POVM) is defined as a map $F : \mathcal{B}(\mathbb{R}) \to \mathcal{L}^+(\Hil)$ such that\footnote{A POVM is typically defined as a $\sigma$-additive map from the Borel $\sigma$-algebra $\mathcal{B}(X)$ of a topological space $X$. In the special case where 
$X=\mathbb{R}$, the term real POVM is used. Throughout the paper, we consider only real POVMs.}
$$
F\left(\bigcup_{n=1}^\infty\Delta_n\right)=\sum_{n=1}^\infty{F(\Delta_n)}, \qquad F(\mathbb{R})=I,
$$
where $\{\Delta_n\}$ is a countable family of disjoint sets in $\mathcal{B}(\mathbb{R})$ and the series on the right-hand side is understood as the strong limit of the corresponding sequence of partial sums.

The spectrum $\sigma(F)$ of a POVM $F$ is the closed set 
\begin{equation}\label{uman63kk}
\{\lambda\in\mathbb{R} \ : \forall\Delta \ \mbox{open}, \ \lambda\in\Delta\ \Rightarrow F(\Delta)\not=0   \}.
\end{equation}

A POVM $F$ is said to be \emph{commutative} if 
$$
[F(\Delta_1), F(\Delta_2)]:=F(\Delta_1)F(\Delta_2)-F(\Delta_2)F(\Delta_1)=0, \qquad \Delta_j\in\mathcal{B}(\mathbb{R})
$$
and \emph{orthogonal} if $\Delta_1\cap\Delta_2=\emptyset$ implies
$F(\Delta_1)F(\Delta_2)=0$.

\emph{A projection valued measure} (for short, a PVM) is an orthogonal POVM or, equivalently, is a POVM $F$ such that $F(\Delta)$ is an orthogonal projection operator for each $\Delta\in\mathcal{B}(\mathbb{R})$. 


\section{POVMs generated by Parseval frames}
\subsection{Definition and elementary properties}\label{sIII.1}
Let $\F_e=\{e_j\}_{j\in\mathbb{J}}$ be a Parseval frame in a Hilbert space $\mathcal{H}$, where   $\mathbb{J}$ is a finite or countable subset of integers, and let ${\sf E}=\{E_j\}_{j\in\mathbb{J}}$ be the set of real numbers.
Using $\F_e$ and ${\sf E}$, one can  define the POVM
\begin{equation}\label{uman63b}
F_ {{\sf E}, e}(\Delta)=\sum_{j\in\mathbb{J}_\Delta}\langle \cdot, e_j\rangle e_j, \qquad \mathbb{J}_\Delta=\{j\in\mathbb{J} : E_j\in\Delta\}, \qquad \Delta\in\mathcal{B}(\mathbb{R}).
\end{equation}

 It is easy to see that multiplying each frame element $e_j$ by a unimodular constant (i.e., a complex number of modulus one) yields another Parseval frame. From the POVMs perspective, this transformation does not alter the measurement statistics. In other words, replacing the frame elements $\{e_j\}$ with $\{e^{i\theta_j}e_j\}$, leaves 
$F_{{\sf E}, e}$  unchanged. Thus, POVM 
$F_{{\sf E}, e}$ is defined by the Parseval frame $\F_e$ up to multiplication by unimodular constants.

In general, a Parseval frame 
$\F_e$ may contain zero vectors 
$e_j=0$. However, in our setting, the presence of such vectors does not affect the definition of POVM \eqref{uman63b}, while it may introduce unnecessary ambiguity regarding the numbers $E_j$ associated with them. For this reason, in what follows we assume that our Parseval frames \emph{contain no zero vectors}, i.e. 
$$
\F_e=\F_{e\not=0}=\{e_j\in\F_e : e_j\not=0\}.
$$

The formula \eqref{uman63b} provides a bridge to the rich theory of POVMs, offering new perspectives for the study of frames. The importance of the relationship between POVM and frame theory is highlighted by the following simple proof of the fundamental identity for Parseval frames, originally established in \cite{BCE} through intrinsic methods of frame theory. Specifically, one obtains that for every subset  $J\subset\mathbb{J}$ and $f\in\Hil$,

\begin{equation}\label{pop21}
\sum_{j\in{J}}|\langle f, e_j\rangle|^2-\|\sum_{j\in{J}}\langle{f}, e_j\rangle{e_j}\|^2=\sum_{j\in{J^c}}|\langle f, e_j\rangle|^2-\|\sum_{j\in{J^c}}\langle{f}, e_j\rangle{e_j}\|^2, \qquad J^c=\mathbb{J}\setminus{J}.
\end{equation}
 Taking \eqref{uman63b} into account, choosing $\Delta=J\in\mathcal{B}(\mathbb{R})$ and setting $E_j=j$ (i.e., ${\sf E}={J}$), we can rewrite \eqref{pop21} as follows
$$
\langle{F_{{\sf E}, e}(\Delta)}f, f\rangle-\langle{F_{{\sf E}, e}}(\Delta)f, {F_{\sf E, e}(\Delta)}f\rangle=\langle{F_{{\sf E}, e}}(\Delta^c)f, f\rangle-\langle{F_{{\sf E}, e}}(\Delta^c)f, {F_{{\sf E}, e}(\Delta^c)}f\rangle, 
$$
or, that is equivalent, $
\langle{F_{{\sf E}, e}}(\Delta^c){F_{{\sf N}, e}}(\Delta)f, f\rangle=\langle{F_{{\sf E}, e}(\Delta)}{F_{{\sf E}, e}}(\Delta^c)f, f\rangle.$ The last relation is obviously true since ${F_{{\sf E}, e}}(\Delta^c)=I-{F_{{\sf E}, e}}(\Delta)$.
This means that the fundamental identity \eqref{pop21} is, in fact, a straightforward property of POVMs. Similarly, the inequality 
$$
\sum_{j\in{J}}|\langle f, e_j\rangle|^2+\|\sum_{j\in{J^c}}\langle{f}, e_j\rangle{e_j}\|^2\geq\frac{3}{4}\|f\|^2
$$
established in \cite{BCE}, turns out to be obvious in POVMs language because it is equivalent to the obvious inequality $\|f-2{F_{{\sf E}, e}(\Delta)}f\|\geq{0}$.

\subsection{Commutative POVM}\label{sIII.2}

As is well known, the orthogonality of a POVM $F$
implies that
$F$ is commutative and, moreover, that
$F$ is a PVM. However, the commutativity of
$F$ does not necessarily imply that $F$ is a PVM. In this sense, commutative POVMs are a straightforward generalization of PVMs. Our objective is to characterize the commutative POVMs $F_{{\sf E},e}$ defined by \eqref{uman63b}.

In view of \eqref{auau1} with $A=1$ and \eqref{uman63b}, each POVM $F_ {{\sf E}, e}$ can be associated with the following decomposition of the identity as a sum of positive  bounded operators $L_k$: 
\begin{equation}\label{uman63c}
I=\sum_{k}L_k, \qquad L_k=\sum_{j\in\mathbb{J}_{\Delta_k}}\langle \cdot, e_j\rangle{e_j}, \qquad {\Delta_k}=\{E_k\},
\end{equation}
where different elements
$E_k\in{\sf E}$ determine different operators $L_k$. 

It follows from \eqref{uman63c} that the commutativity of  
$F_{{\sf E},e}$
 is equivalent to the mutual commutativity of the  operators 
$L_k$. For this reason, it is easier to find commutative POVMs when the number of distinct elements in ${\sf E}$ is small.  
In the simplest case, where ${\sf E}$ contains only one element $E$, the collection $\{L_k\}$
 reduces to the identity operator $I$. The  associated POVM 
$$
F_{{\sf E}, e}(\Delta)=\left\{\begin{array}{c}
0 \quad \mbox{if} \quad E\not\in\Delta; \\
I \quad \mbox{if} \quad E\in\Delta
\end{array}\right.
$$
is necessarily commutative for every choice of $\F_e$. 

Let us now consider the opposite case of POVM $F_{{\sf E}, e}$, where all elements of 
 ${\sf E}$ are pairwise distinct, i.e., $E_i \neq E_j$ whenever $i \neq j \in \mathbb{J}$. Then
 the set  $\{L_k\}_{k\in\mathbb{J}}$ consists of rank-one operators   $L_k=\langle \cdot, e_k\rangle{e_k}$.
 The commutativity of $F_{{\sf E}, e}$  is equivalent to the condition $[L_i, L_j]=0$.
This, in turn, is equivalent to the commutativity of the associated rank-one orthogonal projection operators
 $[P_i, P_j]=0$, where 
$$
P_k=\frac{1}{\|e_k\|^2}\langle \cdot, e_k\rangle{e_k}, \qquad k\in\{i, j\}.
$$
In view of the commutation relation, the operator
$P_iP_j$ is the orthogonal projection in $\Hil$ into the one-dimensional subspace spanned by both
$e_i$ and $e_j$ or $P_iP_j=0$. The first alternative corresponds to the case where the vectors 
$e_i$ and $e_j$ are collinear, while the second corresponds to  the case where the vectors 
$e_i$ and $e_j$ are orthogonal. Based on this observation, one can characterize commutative POVM's 
$F_{{\sf E},e}$.
\begin{thm}\label{uuu4}
Let $F_{{\sf E}, e}$ be the POVM defined in \eqref{uman63b}, where ${\sf E}$ is a set of distinct numbers.  The following are equivalent:
\begin{itemize}
\item[(i)] $F_{{\sf E}, e}$ is a commutative POVM;
\item[(ii)] there exists an orthonormal basis $\{\gamma_i\}$ of $\Hil$ such that  
\begin{equation}\label{kuku1}
\F_{e}=\{c_1^k\gamma_1\}_{k=1}^{r_1}\cup\{c_2^k\gamma_2\}_{k=1}^{r_2}\cup\{c_3^k\gamma_3\}_{k=1}^{r_3}\cup\ldots,
\end{equation}
where
\begin{equation}\label{kuku2}
\sum_{k=1}^{r_i}|c_i^k|^2=1, \qquad  c_i^k\not=0, \qquad 1\leq{i}\leq\dim\Hil, \qquad  r_i\in\mathbb{N}\cup\{\infty\}.
\end{equation}
\end{itemize}
\end{thm}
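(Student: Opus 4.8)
The plan is to exploit the analysis already carried out immediately before the statement, which reduces commutativity of $F_{{\sf E}, e}$ to pairwise commutativity of the rank-one projections $P_k = \frac{1}{\|e_k\|^2}\langle\cdot, e_k\rangle e_k$, and in turn to the dichotomy that for each pair $i \neq j$ the vectors $e_i, e_j$ are either collinear or orthogonal. So the bulk of the work is the implication $(i) \Rightarrow (ii)$; the converse $(ii) \Rightarrow (i)$ should be essentially a direct computation.

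For $(i) \Rightarrow (ii)$: I would introduce on $\mathbb{J}$ the relation $j \sim j'$ iff $e_j$ and $e_{j'}$ are collinear. First I would check this is an equivalence relation — reflexivity and symmetry are trivial, and transitivity follows because collinearity of nonzero vectors is transitive (recall we have assumed $\F_e$ contains no zero vectors, which is exactly what makes this work). By the dichotomy from the preceding paragraph, any two frame vectors lying in different equivalence classes are orthogonal. Enumerate the equivalence classes; within the $i$-th class pick any member and normalize it to get a unit vector $\gamma_i$, so that every $e_j$ in that class is $c_i^k \gamma_i$ for some nonzero scalar $c_i^k$ (here $k$ ranges over a re-indexing of the class, $1 \le k \le r_i$ with $r_i \in \mathbb{N}\cup\{\infty\}$ the cardinality of the class). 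Orthogonality across classes gives $\langle \gamma_i, \gamma_{i'}\rangle = 0$ for $i \neq i'$. That the $\{\gamma_i\}$ form a \emph{complete} orthonormal system in $\Hil$ follows from completeness of the frame $\F_e$: the closed span of $\F_e$ is $\Hil$, and that span equals the closed span of the $\gamma_i$. Finally, applying the Parseval identity $\|f\|^2 = \sum_j |\langle f, e_j\rangle|^2$ to $f = \gamma_i$ and using orthogonality across classes collapses the sum to the single class $i$, yielding $1 = \sum_{k=1}^{r_i} |c_i^k|^2 \,|\langle \gamma_i, \gamma_i\rangle|^2 = \sum_{k=1}^{r_i}|c_i^k|^2$, which is \eqref{kuku2}. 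One subtlety worth a sentence: if some $r_i$ is countably infinite, one must note the scalars $c_i^k$ are square-summable so the family $\{c_i^k \gamma_i\}_k$ is a genuine (possibly infinite) subfamily of the frame — this is automatic from the displayed computation.

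For $(ii) \Rightarrow (i)$: given the decomposition \eqref{kuku1}–\eqref{kuku2}, each rank-one operator $\langle\cdot, e_j\rangle e_j$ with $e_j = c_i^k\gamma_i$ equals $|c_i^k|^2 \langle\cdot, \gamma_i\rangle\gamma_i$, so for any Borel set $\Delta$ the operator $F_{{\sf E}, e}(\Delta) = \sum_{j \in \mathbb{J}_\Delta}\langle\cdot, e_j\rangle e_j$ is a nonnegative-coefficient combination of the mutually orthogonal one-dimensional projections $Q_i := \langle\cdot, \gamma_i\rangle\gamma_i$. Since the $Q_i$ pairwise commute (indeed $Q_iQ_{i'} = \delta_{ii'}Q_i$), any two such combinations commute, hence $F_{{\sf E}, e}$ is commutative. (I should be a little careful about convergence of the series defining $F_{{\sf E}, e}(\Delta)$ and interchanging sums, but since everything is a strongly convergent sum of positive operators bounded by $I$, this is routine.)

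The main obstacle, such as it is, is the bookkeeping in $(i)\Rightarrow(ii)$: transitivity of collinearity (which silently relies on the no-zero-vectors convention) and the verification that the $\gamma_i$ are complete, not merely orthonormal. Everything else is a short computation once the equivalence-class picture is in place. I would also remark after the proof that the numbers $E_j$ attached to collinear frame vectors $c_i^k\gamma_i$ within one class are pairwise distinct by hypothesis, so a single orthonormal vector $\gamma_i$ genuinely carries $r_i$ distinct spectral values — this is what makes $F_{{\sf E}, e}$ a non-PVM commutative POVM in general, tying back to the motivating discussion.
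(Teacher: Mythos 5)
Your argument is correct and follows essentially the same route as the paper: both exploit the collinear-or-orthogonal dichotomy for commuting rank-one operators to group the frame vectors into mutually orthogonal collinear families, normalize a representative of each to obtain the $\gamma_i$, and derive \eqref{kuku2} from the Parseval property restricted to the span of $\gamma_i$ (your equivalence-class bookkeeping is just a tidier packaging of the paper's iterative selection). The only substantive difference is that you also write out the converse $(ii)\Rightarrow(i)$, which the paper leaves implicit.
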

\begin{proof}
Set $\gamma_1=\frac{e_1}{\|e_1\|}$, $e_1\in\F_{e}$. As follows from the  above, the set $\F_{e}$ can be partitioned into two mutually disjoint subsets, denoted $\F^1$   and 
$\F^2$, where 
$\F^1$
  consists of all vectors 
$e_{j_1}^1,e_{j_2}^1, \ldots, e_{j_{r_1}}^1$ 
 that are collinear with 
$\gamma_1$, and 
$\F^2$ consists of all vectors $e_j\in\F_{e}$
 that are orthogonal to
$\gamma_1$. By construction $\F^1=\{e_{j_k}^1=c_1^k\gamma_1\}_{k=1}^{r_1}$, where $|c_1^k|=\|e_{j_{k}}^1\|$, $1\leq{k}\leq{r_1}$. The first relation in \eqref{kuku2} for the coefficients $\{c_1^k\}_{k=1}^{r_1}$ follows from the fact that $\{e_{j_k}^1\}_{k=1}^{r_1}$ is a Parseval frame in the one dimensional Hilbert space generated by $\gamma_1$. Choosing a vector
$\gamma_2=\frac{e_j}{\|e_j\|}$ where $e_j\in\F^2$ and repeating the same reasoning, we select the subset of vectors $\{e_{j_k}^2=c_2^k\gamma_2\}_{k=1}^{r_2}$ from $\F_e$ that are collinear with $\gamma_2$. Repeating this argument $\dim\Hil$-times completes the proof.
\end{proof}

Every commutative POVM $F$ admits a representation as a smearing of a PVM via Feller–Markov kernels \cite{RB1, RB2}.
This means
that the following formula holds:
\begin{equation}\label{uuu8}
F(\Delta)=\int_{\Gamma}\mu_{\Delta}(\lambda)dE(d\lambda), \qquad \Delta\in\mathcal{B}(\mathbb{R}),
\end{equation}
where $E$ is a PVM of a bounded self-adjoint operator $H$ with $\sigma(H)\subset[0, 1]$, $\Gamma\subset\sigma(H)$, $E(\Gamma)=I$,  and $\mu_\Delta(\lambda): \Gamma\times\mathcal{B}(\mathbb{R})\to[0,1]$ is a Feller-Markov kernel (see \cite{RB} --\cite{RB2} for details). The operator $H$ is called the \emph{sharp version} of $F$.

The orthonormal basis
$\{\gamma_i\}_{i=1}^{\dim\Hil}$ constructed in Theorem \ref{uuu4} makes it possible to explicitly construct sharp versions of commutative POVMs $F_{{\sf E}, e}$.

\begin{cor}\label{uuu7}
Under the assumptions of Theorem \ref{uuu4}, the sharp version  of a commutative POVM $F_{{\sf E}, e}$ has the form
\begin{equation}\label{uuu85}
H=\sum_{i=1}^{\dim\Hil}\lambda_i\langle\cdot, \gamma_i\rangle\gamma_i,
\end{equation}
where $\{\gamma_i\}_{i=1}^{\dim\Hil}$ is the orthonormal basis of $\Hil$ from \eqref{kuku1} 
and $\{\lambda_i\}_{i=1}^{\dim\Hil}$
 is an arbitrary sequence of distinct numbers satisfying 
$0\leq\lambda_i\leq{1}$.
\end{cor}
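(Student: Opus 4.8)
The plan is to produce \eqref{uuu85} explicitly as a sharp version in the sense of \eqref{uuu8}, using the structural description of $\F_e$ from Theorem \ref{uuu4}. First I would record the spectral data of $H$. Since $\{\gamma_i\}_{i=1}^{\dim\Hil}$ is an orthonormal basis of $\Hil$ and $0\le\lambda_i\le1$, the operator $H=\sum_i\lambda_i\langle\cdot,\gamma_i\rangle\gamma_i$ is bounded and self-adjoint with $\sigma(H)=\overline{\{\lambda_i\}}\subset[0,1]$, and, because the $\lambda_i$ are pairwise distinct, $H$ is diagonalized by $\{\gamma_i\}$; hence its projection-valued measure $E$ is purely atomic with $E(\{\lambda_i\})=P_{\gamma_i}$, where $P_{\gamma_i}:=\langle\cdot,\gamma_i\rangle\gamma_i$ is the rank-one orthogonal projection onto $\gamma_i$. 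Putting $\Gamma:=\{\lambda_i\}_{i=1}^{\dim\Hil}$ we get $\Gamma\subset\sigma(H)$ and $E(\Gamma)=\sum_iP_{\gamma_i}=I$, so $(H,\Gamma)$ already has the shape demanded on the right-hand side of \eqref{uuu8}.

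Next I would compute $F_{{\sf E},e}$ directly from \eqref{kuku1}. Collecting the frame vectors collinear with a fixed $\gamma_i$, write them as $e^{(i)}_1=c_i^1\gamma_i,\dots,e^{(i)}_{r_i}=c_i^{r_i}\gamma_i$ and let $E^{(i)}_1,\dots,E^{(i)}_{r_i}\in{\sf E}$ be the numbers attached to them under the original indexing $\mathbb{J}$. For each such vector one has $\langle\cdot,e^{(i)}_k\rangle e^{(i)}_k=|c_i^k|^2P_{\gamma_i}$, so \eqref{uman63b} rearranges, with $\mu_\Delta(\lambda_i):=\sum_{k:\,E^{(i)}_k\in\Delta}|c_i^k|^2$, into
\[
F_{{\sf E},e}(\Delta)=\sum_{i=1}^{\dim\Hil}\Big(\sum_{k:\,E^{(i)}_k\in\Delta}|c_i^k|^2\Big)P_{\gamma_i}=\sum_{i}\mu_\Delta(\lambda_i)\,P_{\gamma_i}.
\]
Because $E$ is atomic on $\Gamma$ with atom $P_{\gamma_i}$ at $\lambda_i$, the right-hand side equals $\int_\Gamma\mu_\Delta(\lambda)\,dE(d\lambda)$, so once $\mu$ is shown to be a Feller--Markov kernel in the sense of \cite{RB, RB1, RB2} the identity \eqref{uuu8} holds for $F=F_{{\sf E},e}$ with this $H$. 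For the kernel conditions: fixing $\lambda_i$, the map $\Delta\mapsto\mu_\Delta(\lambda_i)$ is $\sigma$-additive and, by the normalization \eqref{kuku2}, satisfies $\mu_{\mathbb{R}}(\lambda_i)=\sum_{k=1}^{r_i}|c_i^k|^2=1$, hence is a probability measure; and fixing $\Delta$, the map $\lambda\mapsto\mu_\Delta(\lambda)$ on $\Gamma$ is measurable, which is automatic since $\Gamma$ is at most countable.

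For the converse half — that \emph{every} sharp version of $F_{{\sf E},e}$ arises this way, up to the choice of the distinct values $\lambda_i$ — I would note that any sharp version $H$ must, through \eqref{uuu8}, reproduce the operators $P_{\gamma_i}$ as the atoms of its spectral measure: commutativity of $F_{{\sf E},e}$ together with the one-dimensionality of the operators $L_k$ in \eqref{uman63c} (as analyzed before Theorem \ref{uuu4}) forces the atoms $E(\{\lambda\})$ to be exactly the $P_{\gamma_i}$, and minimality of a sharp version forbids further atoms, leaving only the labelling of $\sigma(H)=\overline{\{\lambda_i\}}$ free, subject to $\lambda_i\in[0,1]$ and $\lambda_i\neq\lambda_j$.

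The step I expect to be the main obstacle is the precise matching of $\mu_\Delta(\lambda)$ with the definition of a Feller--Markov kernel used in \cite{RB, RB1, RB2}, especially if one insists on a continuity/Feller property while the values $\{\lambda_i\}$ are chosen so that $\Gamma$ accumulates in $[0,1]$; in that regime one must either appeal to the exact (and milder) regularity required there, or observe that the relabelling freedom in the statement lets one select $\{\lambda_i\}$ for which the Feller property is transparent. The bookkeeping identifying $\mathbb{J}$, the numbers $E_j$, and the coefficients $c_i^k$ is the other place needing care, but it is routine.
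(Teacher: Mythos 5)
Your argument is essentially the paper's own proof: both use the collinearity structure from Theorem \ref{uuu4} to write $F_{{\sf E},e}(\Delta)=\sum_i\bigl(\sum_{k:\,E^{(i)}_k\in\Delta}|c_i^k|^2\bigr)\langle\cdot,\gamma_i\rangle\gamma_i$, take $\Gamma=\{\lambda_i\}$ with the atomic PVM of $H$, and verify that $\mu_\Delta(\lambda_i)$ is a Markov kernel via the normalization \eqref{kuku2}. The only divergence is your ``converse half,'' which the paper neither claims nor needs --- it explicitly remarks afterward that the sharp version is \emph{not} uniquely determined --- so that part can simply be dropped.
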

\begin{proof} Since $F_{{\sf E}, e}(\Delta)=F_{{\sf E}, e\not=0}(\Delta)$, we may assume, without loss of generality, that  $\F_e$ consists of nonzero vectors. 

Let 
$\Delta=\{E_j\}$, $j\in\mathbb{J}$. In this case, taking into account the definition \eqref{uman63b} of 
$F_{{\sf E}, e}$,
$$
F_{{\sf E}, e}(\{E_j\})f=\langle{f}, e_j\rangle{e_j}=\|e_j\|^2\left\langle{f}, \frac{e_j}{\|e_j\|}\right\rangle\frac{e_j}{\|e_j\|}, \qquad f\in\Hil.
$$
 From the proof of Theorem \ref{uuu4} it follows that $\frac{e_j}{\|e_j\|}$ coincides with a certain vector $\gamma_i$ up to multiplication by a unimodular constant. Here, the index $i$ is uniquely defined by $j$, that is, $i=p(j)$. The function $p(\cdot)$ defines a surjection of $\mathbb{J}$ onto the set  $\{1, 2, \ldots, \dim\Hil\}$. Using \eqref{kuku1}, we get
\begin{equation}\label{noch}
e_{j}=c_{p(j)}^{k(j)}\gamma_{p(j)}=c_{i}^{k(j)}\gamma_{i}, 
\end{equation}
where $|c_{p(j)}^{k(j)}|=\|e_{j}\|$ and the coefficient $1\leq{k(j)}\leq{r_{p(j)}}$ 
 specifies which vector $e_{j_1}^{p(j)}, e_{j_2}^{p(j)}, \ldots e_{j_{r_i}}^{p(j)},$ from the proof of Theorem \ref{uuu4}
 coincides with
$e_j$.
Finally, 
$$
F_{{\sf E}, e}(\{E_j\})f=|c_{p(j)}^{k(j)}|^2\langle{f}, \gamma_{p(j)}\rangle\gamma_{p(j)}.
$$

Denote by 
$\Gamma=\{\lambda_i\}_{i=1}^{\dim{\Hil}}$ an arbitrary sequence of distinct numbers $\lambda_i$ satisfying  $0\leq\lambda_i\leq{1}$
 and set 
 $$
\mu_{\{E_j\}}(\lambda)=\left\{\begin{array}{cc}
|c_{p(j)}^{k(j)}|^2  & \lambda=\lambda_{p(j)} \\
0 & \lambda\not=\lambda_{p(j)}
\end{array}\right.,  \qquad \lambda\in\Gamma.
$$
Denoting by
$E$  a
PVM of the self-adjoint operator $H$ defined by \eqref{uuu85}, we get
$$
\int_{\Gamma}\mu_{\{E_j\}}(\lambda)dE(d\lambda){f}=|c_{p(j)}^{k_j}|^2\langle{f}, \gamma_{p(j)}\rangle\gamma_{p(j)}=F_{{\sf E}, e}(\{E_j\})f.
$$
 The extension of $\mu_{\{E_j\}}(\lambda)$
 to the function 
$\mu_\Delta(\lambda) : \Gamma\times\mathcal{B}(\mathbb{R}) \to [0, 1]$:
\begin{equation}\label{uuu9}
\mu_\Delta(\lambda)=\sum_{E_j\in\Delta}\mu_{\{E_j\}}(\lambda), \qquad \lambda\in\Gamma
\end{equation}
 results in a Feller-Markov kernel (see, e.g., \cite[Definitions 7, 8]{RB}); we simply note that 
$\mu_\Delta(\lambda)$ is a probability measure for each $\lambda\in\Gamma$ in light of \eqref{kuku2}. Finally, recalling that
${\sf E}$ is a set of distinct numbers, we get the equality \eqref{uuu8} 
$$
F_{{\sf E}, e}(\Delta)f=\sum_{E_j\in\Delta}F_{{\sf E}, e}(\{E_j\})f=\sum_{E_j\in\Delta}\int_{\Gamma}\mu_{\{E_j\}}(\lambda)dE(d\lambda){f}=\int_{\Gamma}\mu_{\Delta}(\lambda)dE(d\lambda){f}.
$$
To complete the proof, it suffices to note that
the operator $H$ defined by \eqref{uuu85} is bounded, and $\sigma(H)\subset[0,1]$. 
\end{proof}

By means of Corollary \ref{uuu7}, the sharp version of a commutative POVM is not uniquely defined. We refer the reader to \cite{RN} for a detailed investigation of this issue.

\begin{example} Let $\Hil=\mathbb{C}^2$. Consider a set of distinct real numbers ${\sf E}=\{E_1, E_2, E_3\}$ and a Parseval frame $\F_e=\{e_1, e_2, e_3\}$ in $\mathbb{C}^2$, where
$$
e_1=\sqrt{\frac{2}{3}}\left[\begin{array}{c}
1 \\
0 \end{array}\right], \qquad e_2=\sqrt{\frac{1}{3}}\left[\begin{array}{c}   
1 \\
0 \end{array}\right], \qquad e_3=\left[\begin{array}{c}
0 \\
1 \end{array}\right].
$$
By virtue of Theorem \ref{uuu4},  the POVM $F_{{\sf E}, e}$ defined by \eqref{uman63b} is commutative. The corresponding orthonormal basis $\{\gamma_i\}_{i=1}^2$ coincides with the canonical basis of $\mathbb{C}^2$.
The sharp version of $F_{{\sf E}, e}$ is given by a self-adjoint operator $$
H=\lambda_1\langle\,\cdot,\gamma_1\rangle \gamma_1+\lambda_2\langle\cdot, \gamma_2\rangle\gamma_2=\left[\begin{array}{cc}
\lambda_1 & 0 \\
0 & \lambda_2 \end{array}\right],
$$ 
where $0\leq\lambda_1\neq\lambda_2\leq{1}$. The Feller-Markov kernel is defined by \eqref{uuu9}, where $\mu_{\{E_1\}}(\lambda_1)=\frac{2}{3}$, $\mu_{\{E_1\}}(\lambda_2)=0$, $\mu_{\{E_2\}}(\lambda_1)=\frac{1}{3}$, $\mu_{\{E_2\}}(\lambda_2)=0$, $\mu_{\{E_3\}}(\lambda_1)=0$, $\mu_{\{E_3\}}(\lambda_2)=1.$
\end{example}

\subsection{Jointly measurable  POVMs}\label{sIII.3}
POVMs $F_j : \mathcal{B}(\mathbb{R}) \to \mathcal{L}^+(\Hil), \ j=1,2$ are called \emph{jointly measurable} if they are the marginals of a joint POVM $F : \mathcal{B}(\mathbb{R}\times\mathbb{R}) \to \mathcal{L}^+(\Hil)$. Here, the symbol $\mathcal{B}(\mathbb{R}\times\mathbb{R})$ denotes the product $\sigma$-algebra generated by the family of sets $\{\Delta_1\times\Delta_2 : \Delta_j\in\mathcal{B}(\mathbb{R})\}$.  

Two POVMs $F_j : \mathcal{B}(\mathbb{R}) \to \mathcal{L}^+(\Hil)$  \emph{commute} if 
$[F_1(\Delta_1), F_2(\Delta_2)]=0$ for each $\Delta_j\in\mathcal{B}(\mathbb{R}), \ j=1,2$. 
In the following, the commutativity of two POVMs is indicated by the symbol $[F_1, F_2]=0$.  

If $F_1$ and $F_2$ are  PVMs, they are jointly measurable if and only if $[F_1, F_2]=0$. In the case of POVMs, commutativity implies joint measurability, but the converse is not true \cite{Lachti}. Our objective is to determine under what conditions POVMs defined by \eqref{uman63b} are jointly measurable.

 Let $F_{{\sf E}, e}$ and $F_{\widehat{\sf E}, \widehat{e}}$ be two POVMs where $\F_{e}$ and $\F_{\widehat{e}}$ are Parseval frames in $\Hil$ with the same excess number ${\bf e}[\F_e]={\bf e}[\F_{\widehat{e}}]$ and with the same index set $\mathbb{J}$. According to the Naimark theorem \ref{Naimark}, these Parseval frames can be extended to orthonormal bases $\F_{e^+}$ and $\F_{\widehat{e}^+}$ of the same Hilbert space $\Hil^+$ (see \eqref{ups1}). This allows one to construct the PVM
 \begin{equation}\label{uman63f}
F_ {{\sf E}, e^+}(\Delta)=\sum_{j\in\mathbb{J}_\Delta}\langle \cdot, e_j^+\rangle e_j^+, \qquad \mathbb{J}_\Delta=\{j\in\mathbb{J} : E_j\in\Delta\}, \qquad \Delta\in\mathcal{B}(\mathbb{R})
\end{equation}
and, similarly, the PVM $F_{\widehat{\sf E}, \widehat{e}^+}$ with the use of  $\F_{\widehat{e}^+}$ and  $\widehat{E}=\{E_j\}_{j\in\mathbb{J}}$.

By virtue of 
\cite[Theorem 6]{RB}, 
POVMs $F_{{\sf E}, e}$ and $F_{\widehat{\sf E}, \widehat{e}}$ are 
jointly measurable if and only if $[F_{{\sf E}, e^+},  F_{\widehat{\sf E}, \widehat{e}^+}]=0$. 
If $\F_{e}=\F_{\widehat{e}}$, then the orthonormal bases $\F_{e^+}$ and $\F_{\widehat{e}^+}$ coincide and the last commutation relation is evidently valid.
The following result is now proved 
\begin{prop}
 POVMs $F_{{\sf E}, e}$ and $F_{\widehat{\sf E}, e}$ constructed using the same Parseval frame $\F_e$ and arbitrary sets ${\sf E}$ and $\widehat{\sf E}$ are jointly measurable.
\end{prop}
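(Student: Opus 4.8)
The plan is to reduce the claim to the special case already handled in the discussion immediately preceding the proposition, namely the situation $\F_e = \F_{\widehat e}$. First I would recall the setup: both POVMs $F_{{\sf E}, e}$ and $F_{\widehat{\sf E}, e}$ are built from the \emph{same} Parseval frame $\F_e = \{e_j\}_{j\in\mathbb{J}}$, using two possibly different sets of real numbers ${\sf E} = \{E_j\}_{j\in\mathbb{J}}$ and $\widehat{\sf E} = \{\widehat{E}_j\}_{j\in\mathbb{J}}$. Since the two frames are literally equal, they trivially have the same index set $\mathbb{J}$ and the same excess ${\bf e}[\F_e] = {\bf e}[\F_{\widehat e}]$, so the hypotheses of the joint-measurability criterion from \cite[Theorem 6]{RB} are met once we fix a common Naimark dilation.

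Next I would apply Theorem \ref{Naimark} once to the single frame $\F_e$, obtaining an orthonormal basis $\F_{e^+} = \{e_j^+ = e_j \oplus m_j\}_{j\in\mathbb{J}}$ of $\Hil^+ = \Hil \oplus \M$. The point is that this \emph{same} orthonormal basis serves as the Naimark extension for both POVMs: the PVM attached to $F_{{\sf E}, e}$ is $F_{{\sf E}, e^+}(\Delta) = \sum_{j\in\mathbb{J}_\Delta} \langle \cdot, e_j^+\rangle e_j^+$ with $\mathbb{J}_\Delta = \{j : E_j \in \Delta\}$, and the PVM attached to $F_{\widehat{\sf E}, e}$ is $F_{\widehat{\sf E}, e^+}(\Delta) = \sum_{j\in\widehat{\mathbb{J}}_\Delta} \langle \cdot, e_j^+\rangle e_j^+$ with $\widehat{\mathbb{J}}_\Delta = \{j : \widehat{E}_j \in \Delta\}$. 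Both are functions of the single commuting family of rank-one orthogonal projections $\{P_j^+ := \langle \cdot, e_j^+\rangle e_j^+\}_{j\in\mathbb{J}}$ associated with the orthonormal basis $\F_{e^+}$, and these projections mutually commute (indeed $P_i^+ P_j^+ = \delta_{ij} P_j^+$). Consequently $F_{{\sf E}, e^+}(\Delta_1)$ and $F_{\widehat{\sf E}, e^+}(\Delta_2)$, being sums of subfamilies of the $P_j^+$, commute for all $\Delta_1, \Delta_2 \in \mathcal{B}(\mathbb{R})$, i.e. $[F_{{\sf E}, e^+}, F_{\widehat{\sf E}, e^+}] = 0$.

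Finally, invoking \cite[Theorem 6]{RB} in the direction that $[F_{{\sf E}, e^+}, F_{\widehat{\sf E}, e^+}] = 0$ implies joint measurability of $F_{{\sf E}, e}$ and $F_{\widehat{\sf E}, e}$, the proof is complete. I expect no serious obstacle here; this proposition is essentially the observation recorded in the sentence ``If $\F_e = \F_{\widehat e}$, then the orthonormal bases $\F_{e^+}$ and $\F_{\widehat e^+}$ coincide and the last commutation relation is evidently valid'' in the excerpt, promoted to a formal statement. The only point requiring a word of care is that the common excess and common index set guarantee that the \emph{same} Hilbert space $\Hil^+$ and the \emph{same} orthonormal basis can be used simultaneously for both dilations — but since the two frames are identical this is immediate rather than a genuine difficulty. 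One could add a remark that the result persists verbatim if one only assumes $\F_{\widehat e}$ is obtained from $\F_e$ by multiplying each $e_j$ by a unimodular constant, since (as noted earlier in Sect. \ref{sIII.1}) such a change leaves the POVMs unchanged.
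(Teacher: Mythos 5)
Your argument is correct and is exactly the paper's own proof: the authors also take a single Naimark dilation of $\F_e$, observe that both lifted PVMs $F_{{\sf E}, e^+}$ and $F_{\widehat{\sf E}, e^+}$ are built from the same orthonormal basis and hence commute, and then conclude joint measurability via \cite[Theorem 6]{RB}. Your explicit remark that both PVMs are sums of the mutually commuting rank-one projections $\langle\cdot, e_j^+\rangle e_j^+$ just spells out what the paper calls ``evidently valid.''
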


Consider another special case, assuming $\F_{e}\not=\F_{\widehat{e}}$, and assuming that both ${\sf E}$ and
$\widehat{\sf E}$ contain distinct numbers. Then the relation $[F_{{\sf E}, e^+},  F_{\widehat{\sf E}, \widehat{e}^+}]=0$, which ensures the joint measurability of $F_{{\sf E}, e}$ and $F_{\widehat{\sf E}, \widehat{e}}$ (see \cite[Theorem 6]{RB}), is equivalent to the relation $[L_j, \widehat{L}_i]=0$ for all possible pairs of indices $j,i\in\mathbb{J}$, where
$$
L_j=\langle\cdot, e^+_j\rangle{e_j^+}, \qquad \widehat{L}_i=\langle\cdot, \widehat{e}^+_i\rangle{\widehat{e}_i^+}.
$$
Arguing as in the proof of Theorem \ref{uuu4} and taking into account that the sets $\{e_j^+\}$ and $\{\widehat{e}^+_i\}$ are orthonormal bases of $\Hil^+$, we conclude that $[L_j, \widehat{L}_i]=0$ holds for all indexes $j,i\in\mathbb{J}$ if and only if
there exists bijection $j \to i$ on $\mathbb{J}$ such that each vector $e_j^+$ coincides with $\widehat{e}_i^+$ up to multiplication by a unimodular constant. Therefore (see Sect. \ref{sIII.1}), we can assume 
$e_j^+=\widehat{e}_i^+$  without loss of generality. In this case, as follows from \eqref{ups1},  $e_j=\widehat{e}_i$. Therefore, in the case under consideration, \emph{joint measurability is possible only if both POVM's
$F_{{\sf E}, e}$ and $F_{\widehat{\sf E}, \widehat{e}}$
 are constructed based on the same Parseval frame $\F_e=\F_{\widehat{e}}$.}

\section{Operators generated by Parseval frames}\label{sIV}
\subsection{POVM and associated operators}\label{sIV.1}
Given a POVM $F : \mathcal{B}(\mathbb{R}) \to \mathcal{L}^+(\Hil)$. 
\emph{Is it possible to construct a self-adjoint operator 
$H$ on $\Hil$ that arises naturally from $F$?} If $F$ is a PVM, then a self-adjoint operator $H$ is defined in a simple and natural form \begin{equation}\label{uman63gg}
H=\int_{\mathbb{R}}\lambda{d} F(d\lambda), \qquad \mathcal{D}(H)=\{ f\in\Hil \ : \ \int_{\mathbb{R}}\lambda^2d\langle F(d\lambda)f, f \rangle<\infty\}.
\end{equation}
Here, the spectrum of $H$
coincides with the spectrum of $F$.

If $F$ is a POVM, one can use the Naimark theory approach \cite{Nai}, where the primary attention is paid to the preservation of formulas \eqref{uman63gg}. In this context (see \cite[\$ 111]{AG} for details), each POVM $F$ corresponds to a symmetric operator $H$, for which the following identities are held for $f\in\mathcal{D}(H)$ and  $g\in\Hil$
$$
\langle Hf, g \rangle=\int_{\mathbb{R}}\lambda{d(\langle F(d\lambda)f, g \rangle}, \qquad  
\| Hf\|^2=\int_{\mathbb{R}}\lambda^2{d\langle F(d\lambda)f, f \rangle}.
$$
Let us examine the case where $F=F_{{\sf E}, e}$. By virtue of Theorem \ref{Naimark} and \eqref{uman63b},
\begin{equation}\label{uman63d}
F_ {{\sf E}, e}(\Delta)f=P^+F_ {{\sf E}, e^+}(\Delta)f, \qquad f\in\Hil, 
\end{equation}
where $P^+$ is an orthogonal projection operator on $\Hil^+$ in $\Hil$ and  $F_ {{\sf E}, e^+}$ is the PVM  constructed by the orthonormal basis $\F_{e^+=}\{e_j^+\}_{j\in\mathbb{J}}$ of $\Hil^+$ described in Theorem \ref{Naimark} and the set ${\sf E}$, see \eqref{uman63f}.
In view of \eqref{uman63d}
\begin{equation}\label{busa1}
\langle Hf, g\rangle=\int_{\mathbb{R}}{\lambda}d\langle F_{{\sf E}, e}(d\lambda)f, g \rangle=\int_{\mathbb{R}}{\lambda}d\langle F_{{\sf E}, e^+}(d\lambda)f, g \rangle=\langle{H_{{\sf E},e^+}f, g\rangle}  
\end{equation}
and
\begin{equation}\label{busa2}
\|Hf\|^2=\int_{\mathbb{R}}{\lambda}^2d\langle F_{{\sf E}, e}(d\lambda)f, f\rangle=\int_{\mathbb{R}}{\lambda}^2d\langle F_{{\sf E}, e^+}(d\lambda)f, f \rangle=\|H_{{\sf E}, e^+}f\|^2,
\end{equation}
where 
$$    H_{{\sf E},e^+}=\sum_{j\in\mathbb{J}}E_j \langle \cdot,  e_j^+ \rangle{e}_j^+, \qquad  \mathcal{D}(H_{{\sf E}, e^+})=\{f\in\Hil^+ \ : \ \sum_{j\in\mathbb{J}}E_j^2 |\langle f, e_j^+\rangle|^2<\infty\}
$$
is a self-adjoint operator in $\Hil^+$ associated with the PVM $F_{{\sf E}, e^+}$ via \eqref{uman63gg}.

It follows from \eqref{busa1} and \eqref{busa2}  that  the symmetric operator $H$ has the form
$$
H=H_{{\sf E}, e^+}, \qquad \mathcal{D}(H)=\{f\in\mathcal{D}(H_{{\sf E}, e^+})\cap\Hil \ : \ H_{{\sf E}, e^+}f\in\Hil \},
$$
or, equivalently, using \eqref{ups1},
\begin{equation}\label{busa3}
H=\sum_{j\in\mathbb{J}}E_j \langle \cdot,  e_j \rangle{e}_j, \quad \mathcal{D}(H)=\{f\in\Hil \ : \ \sum_{j\in\mathbb{J}}E_j^2 |\langle f, e_j\rangle|^2<\infty, \ \sum_{j\in\mathbb{J}}E_j \langle f, e_j\rangle{m_j}=0\},
\end{equation}
where
$\F_m=\{m_j\}_{j\in\mathbb{J}}$ is a complementary Parseval frame in Theorem \ref{Naimark}.
 However, such a definition is not suitable for our purposes, since the resulting operator
$H$ is typically not densely defined. 

\begin{example}\label{e8}
Consider the so-called Mercedes frame $\F_e=\{e_1, e_2, e_3\}$ in $\Hil=\mathbb{C}^2$, 
$$
e_1=\sqrt{\frac{2}{3}}\left[
\begin{array}{c}
1 \\
0
\end{array}
\right], \qquad e_2=\sqrt{\frac{2}{3}}\left[
\begin{array}{c}
-\frac{1}{2} \\
\frac{\sqrt{3}}{2} 
\end{array}
\right], \qquad e_3=\sqrt{\frac{2}{3}}\left[
\begin{array}{c}
-\frac{1}{2} \\
 -\frac{\sqrt{3}}{2}
\end{array}
\right].
$$
The vectors of the Parseval frame $\F_m=\{m_1, m_2, m_3\}$ in Theorem \ref{Naimark} have the form (see \cite{BK1})
$$
m_1=m_2=m_3=\frac{1}{\sqrt{3}}\left[\begin{array}{c}
0 \\
0 \\
1 \end{array}\right].
$$
In view of \eqref{busa3}, the operator   
$$
H=\sum_{j=1}^3E_j \langle \cdot,  e_j \rangle{e}_j=\frac{1}{6}\left[\begin{array}{cc}
4E_1+E_2+E_3 & \sqrt{3}(E_3-E_2) \\
\sqrt{3}(E_3-E_2) & 3(E_2+E_3)\end{array}
\right]
$$
acting in $\mathbb{C}^2$ has the domain of definition $\mathcal{D} (H)$ that consists of vectors $f=\left[\begin{array}{c} x\\
y \end{array}\right]$ for which $\frac{1}{\sqrt{3}}\sum_{j=1}^3E_j \langle f, e_j\rangle=0$ or, equivalently, solutions of the equation
$$
(2E_1-E_2-E_3)x+\sqrt{3}(E_2-E_3)y=0.
$$
Consequently, the operator 
$H$
 is defined on $\mathbb{C}^2$ when $E_1=E_2=E_3$. Otherwise, the domain of 
$H$ is not dense.
\end{example}

Working toward resolving issues related to non-densely defined domains, we may associate with 
POVM $F_{{\sf E}, e}$
 an extended version of \eqref{busa3}, omitting the condition $\sum_{j\in\mathbb{J}}E_j \langle f, e_j\rangle{m_j}=0$:
 \begin{equation}\label{K5}
H_{}=P^+H_{{\sf E}, e^+}=\sum_{j\in\mathbb{J}}E_j \langle  \cdot, e_j \rangle\,e_j, 
\end{equation}
\begin{equation}\label{busa4}
 \mathcal{D}(H)=\mathcal{D}(H_{\sf E, e^+})\cap{\mathcal{H}}=\{f\in\mathcal{H} : \sum_{j\in\mathbb{J}}E_j^2|\langle f, e_j\rangle|^2<\infty \}.  
\end{equation}
At this point, a natural question arises: is the definition of 
$H$, given by the formulas 
\eqref{K5} and 
\eqref{busa4}, optimal? 
 The following example provides a hint to the proper answer.

\begin{example}\label{rrr1}
Let $\{\gamma_n\}_{n=1}^\infty$ be an orthonormal basis of $\Hil$. Set $\mathbb{J}=\mathbb{Z}\setminus\{0\}$ and consider the quantities $E_n=(n+1)^2, \ E_{-n}=0$ ($n\in\mathbb{N}$). The set of vectors $\F_e=\{e_j\}_{j\in\mathbb{J}}$, where
$$
e_n=\frac{1}{n+1}\gamma_n, \qquad e_{-n}=\sqrt{1-\frac{1}{(n+1)^2}}\gamma_n, \qquad n\in\mathbb{N}
$$
is a Parseval frame in $\Hil$. The operator $H$ defined by \eqref{K5} coincides with the 
identity operator
$$
H=\sum_{j\in\mathbb{J}}E_j\langle\cdot, e_j\rangle{e_j}=\sum_{n\in\mathbb{N}}\langle\cdot, \gamma_n\rangle{\gamma_n}=I
$$
but, in view of \eqref{busa4}, it is defined on the dense set
$\{f\in\Hil  :  \sum_{n\in\mathbb{N}}{(n+1)^2}|\langle{f, \gamma_n}\rangle|^2<\infty\}$ in $\Hil$.
\end{example}

The difficulty in constructing a self-adjoint operator, as presented in the above example, can be resolved by extending the domain of definition of  $H$. It suffices to consider the domain
\begin{equation}\label{busa4b}
\mathcal{D}(H)=\{f\in\Hil \ : \ \sum_{j\in\mathbb{J}}E_j\langle{f}, e_j\rangle{e_j} \ \mbox{converges unconditionally in }\ \Hil\}
\end{equation} in place of \eqref{busa4}.
In general, the set defined by \eqref{busa4} is smaller than the set determined by \eqref{busa4b}. However, for many frames, these two expressions define the same set of vectors. For example, this  holds when the excess ${\bf e}[\F_e]$ of 
$\F_e$ is finite or when the set 
${\sf E}$ is bounded, that is, $\sup_{j\in\mathbb{J}}\{|E_j|\}<\infty$ \cite{BK2}.
Nevertheless, in the case where 
${\bf e}[\F_e]=\infty$ and
$\sup_{j\in\mathbb{J}}\{|E_j|\}=\infty$, the situation becomes considerably more delicate. Specifically, for any unbounded set 
${\sf E}$, there exists a Parseval frame $\F_e$  such that the domain \eqref{busa4}
 (or the domain \eqref{busa4b}) 
produces a trivial set\footnote{The corresponding result for 
\eqref{busa4}
 was established in \cite{BK2}; the argument for \eqref{busa4b} 
is analogous.} 
$\mathcal{D}(H)=\{0\}$.
However, when the Parseval frame (or, equivalently, the corresponding POVM) satisfies certain structural conditions, it is possible to preserve the self-adjointness of the corresponding operator $H$.

In what follows, taking formulas \eqref{uman63r} and \eqref{K5} into account, we denote by 
$H_{{\sf E},e}$ the operator defined in 
\eqref{K5}. In summary, we will consider the operator 
$H_{{\sf E},e}$ defined by 
\eqref{K5}
(or equivalently \eqref{uman63r}), with two possible domain choices, namely 
\eqref{busa4}
 or \eqref{busa4b}.

\begin{thm}\label{duda1}
Assume that a POVM $F_{{\sf E},e}$ is commutative and ${\sf E}$ contains distinct numbers. Then the 
operator $H_{{\sf E},e}$ with the domain  \eqref{busa4b} is self-adjoint in $\Hil$ and its spectrum coincides with the closure of eigenvalues 
\begin{equation}\label{eigen12}
    \left\{\lambda_i=\sum_{k=1}^{r_i}E^{k}_i|c_i^k|^2\right\}_{i=1}^{\dim\Hil},
\end{equation}
where the coefficients $c_i^k$ are taken from \eqref{kuku1} and $E^{k}_i\in{\sf E}$ corresponds to the vector $e_j=c_i^{k(j)}\gamma_i\in\F_e$ in \eqref{noch}.
\end{thm}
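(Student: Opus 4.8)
The plan is to exploit the structural decomposition of $\F_e$ provided by Theorem~\ref{uuu4}. Since $F_{{\sf E},e}$ is commutative and ${\sf E}$ consists of distinct numbers, there is an orthonormal basis $\{\gamma_i\}_{i=1}^{\dim\Hil}$ of $\Hil$ such that $\F_e=\bigcup_i\{c_i^k\gamma_i\}_{k=1}^{r_i}$ as in \eqref{kuku1}, with $\sum_k|c_i^k|^2=1$. Grouping the sum $\sum_{j\in\mathbb{J}}E_j\langle f,e_j\rangle e_j$ according to which ray $\gamma_i$ each $e_j$ belongs to, and using $e_j=c_i^{k(j)}\gamma_i$ from \eqref{noch}, the term coming from the $i$-th ray collapses to $\big(\sum_{k=1}^{r_i}E_i^k|c_i^k|^2\big)\langle f,\gamma_i\rangle\gamma_i=\lambda_i\langle f,\gamma_i\rangle\gamma_i$. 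First I would make this regrouping rigorous: for each $i$, the finite or countable subfamily $\{c_i^k\gamma_i\}_k$ is a Parseval frame in the line $\mathbb{C}\gamma_i$, so $\sum_k|c_i^k|^2\langle f,\gamma_i\rangle\gamma_i=\langle f,\gamma_i\rangle\gamma_i$ converges unconditionally, and multiplying each term by the bounded-within-this-block scalars $E_i^k$ is controlled once we know $\sum_k|E_i^k||c_i^k|^2<\infty$.

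The next step is to identify the domain \eqref{busa4b} explicitly. I claim that $\sum_{j\in\mathbb{J}}E_j\langle f,e_j\rangle e_j$ converges unconditionally in $\Hil$ if and only if (a) for each $i$ the inner sum $\sum_{k=1}^{r_i}E_i^k|c_i^k|^2$ converges absolutely — call its value $\lambda_i$ — and (b) $\sum_{i=1}^{\dim\Hil}|\lambda_i|^2|\langle f,\gamma_i\rangle|^2<\infty$. The "if" direction follows by the regrouping above plus the fact that a series indexed by a disjoint union converges unconditionally when the grand partial sums are Cauchy, which reduces to (a) and (b) since the blocks are mutually orthogonal. The "only if" direction uses that unconditional convergence forces convergence of every subseries, in particular of each single block $\sum_k E_i^k c_i^k\langle f,\gamma_i\rangle\gamma_i$, which (when $\langle f,\gamma_i\rangle\neq0$) forces absolute convergence of $\sum_k E_i^k|c_i^k|^2$ because all terms are real multiples of the fixed vector $\gamma_i$ with signs we cannot control; and orthogonality of the blocks then gives (b). With (a) and (b) in hand, $H_{{\sf E},e}f=\sum_{i}\lambda_i\langle f,\gamma_i\rangle\gamma_i$ with domain $\{f:\sum_i|\lambda_i|^2|\langle f,\gamma_i\rangle|^2<\infty\}$ — but one must be careful about rays where $\sum_k E_i^k|c_i^k|^2$ diverges; on such an $f$ with nonzero $i$-th coordinate the series does not converge unconditionally, so those coordinates are simply forced to be zero, which I would fold into the domain description (equivalently, discard those $\gamma_i$, or set the corresponding constraint $\langle f,\gamma_i\rangle=0$).

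Once $H_{{\sf E},e}$ is written as $\sum_i\lambda_i\langle\cdot,\gamma_i\rangle\gamma_i$ on its natural maximal domain, self-adjointness is the standard fact that a diagonal operator in an orthonormal basis with real diagonal entries, taken on $\{f:\sum_i\lambda_i^2|\langle f,\gamma_i\rangle|^2<\infty\}$, is self-adjoint — I would cite this or give the one-line argument via the adjoint. The spectrum statement then follows from the same standard theory: $\sigma(H_{{\sf E},e})=\overline{\{\lambda_i\}}$, since each $\lambda_i$ is an eigenvalue with eigenvector $\gamma_i$, the spectrum is closed, and for $\mu\notin\overline{\{\lambda_i\}}$ the operator $(H_{{\sf E},e}-\mu)^{-1}$ is the bounded diagonal operator with entries $(\lambda_i-\mu)^{-1}$.

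The main obstacle I anticipate is the "only if" half of the domain characterization: pinning down exactly why unconditional convergence of the full frame series over $\mathbb{J}$ forces \emph{absolute} (not merely conditional) convergence of each block sum $\sum_k E_i^k|c_i^k|^2$, and handling uniformly the possibility that some blocks have divergent sums — i.e. making precise that such divergent rays contribute a constraint $\langle f,\gamma_i\rangle=0$ rather than breaking the self-adjointness. Everything after the domain is identified is routine spectral theory of diagonal operators.
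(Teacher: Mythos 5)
Your route is the same as the paper's: invoke Theorem \ref{uuu4} to obtain the orthonormal basis $\{\gamma_i\}$, regroup the frame series block by block into $\sum_i\lambda_i\langle f,\gamma_i\rangle\gamma_i$, and finish with the standard spectral theory of diagonal operators. The regrouping, the ``only if'' half of your domain analysis, and the spectral computation are all fine. The genuine gap sits exactly where you anticipate the main obstacle, but the characterization you propose there is false: conditions (a) and (b) do \emph{not} imply unconditional convergence of $\sum_{j\in\mathbb{J}}E_j\langle f,e_j\rangle e_j$. Unconditional convergence forces convergence of \emph{every} subseries, including those selecting only part of each block, so the quantity that governs \eqref{busa4b} is the absolute block sum $s_i=\sum_{k=1}^{r_i}|E_i^k|\,|c_i^k|^2$, not the signed sum $\lambda_i=\sum_{k=1}^{r_i}E_i^k|c_i^k|^2$; one checks that \eqref{busa4b} equals $\{f\in\Hil:\sum_i s_i^2|\langle f,\gamma_i\rangle|^2<\infty\}$ (with $\langle f,\gamma_i\rangle=0$ forced whenever $s_i=\infty$). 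When there is cancellation inside a block this set is strictly smaller than $\{f:\sum_i\lambda_i^2|\langle f,\gamma_i\rangle|^2<\infty\}$. Concretely, take $\dim\Hil=\infty$, $r_i=2$, $c_i^1=c_i^2=1/\sqrt2$, $E_i^1=i$, $E_i^2=-i$: the hypotheses of Theorem \ref{duda1} hold, every $\lambda_i=0$, so your (a) and (b) are satisfied by every $f\in\Hil$ and your argument would yield $H_{{\sf E},e}=0$ on all of $\Hil$; but the subseries over $k=1$ is $\sum_i\tfrac{i}{2}\langle f,\gamma_i\rangle\gamma_i$, so \eqref{busa4b} is the proper dense subspace $\{f:\sum_i i^2|\langle f,\gamma_i\rangle|^2<\infty\}$, on which the zero operator is symmetric (indeed essentially self-adjoint) but not equal to its adjoint, hence not self-adjoint as claimed.

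You should know that the paper's own proof is terse at precisely this step --- it asserts that the unconditional-convergence description of $\mathcal{D}(\widehat H)$ ``coincides'' with \eqref{busa4b} after regrouping, which is the same unjustified identification --- so the difficulty is intrinsic and not an artifact of your write-up. To close the gap one needs either an extra hypothesis ensuring $s_i\leq C(1+|\lambda_i|)$ uniformly in $i$ (e.g.\ all $E_i^k$ of one sign within each block, a bounded set ${\sf E}$, or finite excess), or one must redefine the domain via the regrouped series $\sum_i\lambda_i\langle f,\gamma_i\rangle\gamma_i$. Your fallback of discarding the $\gamma_i$ with divergent blocks does not rescue self-adjointness in $\Hil$ either, since it leaves the operator non-densely defined; note also that \eqref{eigen12} tacitly presupposes that each $\lambda_i$ converges. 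Everything downstream of the domain identification --- self-adjointness of a real diagonal operator on its maximal domain and $\sigma(H_{{\sf E},e})=\overline{\{\lambda_i\}}$ --- is correct and matches the paper.
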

\begin{proof}  Assume that $f$ belongs to the domain defined by \eqref{busa4b}.
Then, employing  \eqref{kuku1}, one can rewrite \eqref{K5} as follows:
\begin{equation}\label{fufa1}
H_{{\sf E},e}f=\sum_{j\in\mathbb{J}}E_j \langle  f, e_j \rangle\,e_j=\sum_{i=1}^{\dim\Hil}\sum_{k=1}^{r_i}E^{k}_i|c_i^k|^2\langle  f, \gamma_i \rangle\,\gamma_i,  \end{equation}
where $\{\gamma_i\}$ is an orthonormal basis in $\Hil$.
Projecting onto the one-dimensional subspace generated by 
$\gamma_i$ yields a series 
$$
\left(\sum_{k=1}^{r_i}E^{k}_i|c_i^k|^2\right)\langle  f, \gamma_i \rangle\,\gamma_i
$$
 that converges unconditionally in 
$\Hil$, and hence 
$\sum_{k=1}^{r_i}E^{k}_i|c_i^k|^2$ also converges unconditionally in $\mathbb{C}$.
Denote
$\lambda_i=\sum_{k=1}^{r_i}E^{k}_i|c_i^k|^2$ and consider a self-adjoint operator in $\Hil$
$$
\widehat{H}=\sum_{i=1}^{\dim\Hil}\lambda_i\langle{\cdot}, \gamma_i\rangle\gamma_i, \qquad \mathcal{D}(\widehat{H})=\{f\in\Hil : \sum_{i=1}^{\dim\Hil}\lambda_i^2|\langle{f}, \gamma_i\rangle|^2<\infty\}.
$$
Since $\{\gamma_i\}$ is an orthonormal basis, the set $\mathcal{D}(\widehat{H})$ admits an equivalent description \cite[p. 200]{Heil}
$$
\mathcal{D}(\widehat{H})=\{f\in\Hil \ : \ \sum_{i=1}^{\dim\Hil}\lambda_i\langle{\cdot}, \gamma_i\rangle\gamma_i  \quad \mbox{converges unconditionally} \ \}.  
$$
By comparing the preceding relation with \eqref{fufa1} and using \eqref{kuku1}, we get that 
$\widehat{H}$
 coincides with the operator $H_{{\sf E},e}$ defined on the domain \eqref{busa4b}. This completes the proof.
\end{proof}

\begin{rem} If all $\lambda_i$ belong to $[0, 1]$, then the operator $H_{{\sf E},e}$ considered in Theorem \ref{duda1} coincides with the sharp version of POVM $F_{{\sf E}, e}$, see Corollary \ref{uuu7}.
\end{rem}

\subsection{$\sigma(F_ {{\sf E}, e})$ vs. $\sigma(H_{{\sf E},e})$: what they share and how they differ}\label{sIV.2}
In view of \eqref{uman63kk}, the spectrum 
$\sigma(F_{{\sf E}, e})$ of the POVM $F_{{\sf E}, e}$
coincides with the closure of 
${\sf E}$. Our aim is now to analyze the relationship between 
$\sigma(F_{{\sf E}, e})$ and $\sigma(H_{{\sf E}, e})$.

If the excess ${\bf e}[\F_e]$ of $\F_e$ is zero, then $\F_e$ turns out to be an orthonormal basis in $\Hil$ and $\sigma(F_{{\sf E}, e})=\sigma(H_{{\sf E}, e})$.
However, when $\mathbf{e}[\mathcal{F}_e] > 0$, the relationship between $\sigma(F_{{\sf E}, e})$ and $\sigma(H_{{\sf E}, e})$ becomes less obvious than that described above. 
The variety of possible relationships is easily illustrated in the case of commutative POVMs. Consider a commutative POVM as described in Theorem 8. Then,  the spectrum of the self-adjoint operator $H_{{\sf E}, e}$ with the domain \eqref{busa4b} coincides with the closure of eigenvalues \eqref{eigen12}. 

Assume that $\mathbf{e}[\mathcal{F}_e]$ is finite, let $\mathbf{e}[\mathcal{F}_e]=n$.
Then, among the vectors $\gamma_i$ in \eqref{kuku1}, at most 
$n$ can be repeated (that is, it has at least two nonzero coefficients 
$c_i^1, c_i^2$). Consequently, $\sigma(H_{{\sf E}, e})$
contains all elements of ${\sf E}$ except possibly 
$n$ of them.

Consider one of the simplest examples of Parseval frames with 
$\mathbf{e}[\mathcal{F}_e]=\infty$, assuming in \eqref{kuku1} that 
$\dim\Hil=\infty$ and that all basis vectors corresponding to $\gamma_i$ are repeated  $m$ times, i.e.,
$c_i^1=\ldots=c_i^m=\frac{1}{\sqrt{m}}$, \  $1\leq{i}\leq\infty$, \ $m\in\mathbb{N}$.
In this case, \eqref{eigen12} implies that the eigenvalues of $H_{{\sf E}, e}$ are given by weighted averages of the values $E_{i}^1, \ldots, E_{i}^m$ of ${\sf E}$:
$$
\left\{\lambda_i = \frac{1}{m}\sum_{k=1}^m E^{k}_i\right\}_{i=1}^\infty.
$$
Instead of assuming a constant repetition number $m$ and a constant multiplier $\frac{1}{\sqrt{m}}$, one may consider a variable choice of these parameters, chosen so that the coefficients of the basis vectors coincide with the square roots of the weights in a weighted average.
 
In the general case of non-commutative POVMs, as shown below, if a Parseval frame 
$\F_e$ possesses additional properties relevant to applications, one may expect  predictable relations between 
${\sf E}$
 and  $\sigma(H_{{\sf E},e})$.

 Before turning to concrete examples, we present a general scheme for the calculation of $\sigma(H_{{\sf E},e})$. To do this, we first develop a convenient description of $H_{{\sf E}, e}$.
Denote by
$$
\theta_e{f}=\{\langle f, e_j\rangle\}_{j\in\mathbb{J}}, \quad f\in\Hil \quad \mbox{and} \quad \theta_e^*\{c_j\}=\sum_{j\in\mathbb{J}}c_je_j, \quad \{c_j\}\in{l_2(\mathbb{J})}$$
\emph{the analysis operator} $\theta_e$ and \emph{the synthesis operator} $\theta_e^*$, associated with  $\F_e$.

For a Parseval frame, the analysis operator is an isometric mapping $\Hil \to l_2(\mathbb{J})$. Hence, $\theta_e^*\theta_e=I$,
where $I$ is the identity operator in $\Hil$. On the other hand, $\theta_e\theta_e^*$ is the orthogonal projection of 
$l_2(\mathbb{J})$ into the range 
$\mathcal{R}(\theta_e)$ of $\theta_e$. The orthogonal projection operator
$\theta_e\theta_e^*$ in $l_2(\mathbb{J})$ can be represented as multiplication by the Gram matrix $G_e=\{\langle e_k, e_j\rangle\}_{j,k\in\mathbb{J}}$ of $\F_e$ (see \cite[p. 191]{Heil} for details). In what follows, we identify the projection operator   $\theta_e\theta_e^*$ and the Gram matrix $G_e$ that represents it.

Consider an operator of multiplication by the set ${\sf E}=\{E_j\}_{j\in\mathbb{J}}$ in $l^2(\mathbb{J})$: 
$$
\mathcal{E}\{c_j\}=\{E_jc_j\}, \qquad \mathcal{D}(\mathcal{E})=\{\{c_j\}\in{l^2(\mathbb{J}) \ : \ \{E_jc_j\}\in{l^2(\mathbb{J})} }\}
$$
and rewrite the formula \eqref{K5}
with the use of
$\mathcal{E}$,
\begin{equation}\label{xoxo1}
H_{{\sf E}, e}=\theta^*_e\mathcal{E}\theta_e=\theta^*_e\theta_e\theta^*_e\mathcal{E}\theta_e\theta^*_e\theta_e=\theta^*_eG_e\mathcal{E}G_e\theta_e.
\end{equation}

\begin{thm}\label{xx36}  Assume that the operator
$H_{{\sf E}, e}$ with the domain of definition \eqref{busa4} is densely defined in $\Hil$. Then  
 $\sigma(H_{{\sf E}, e})\setminus\{0\}$ coincides with $\sigma(B)\setminus\{0\}$, where 
 \begin{equation}\label{xx2}
B=G_e\mathcal{E}G_e, \qquad \mathcal{D}(B)=\{\{c_j\}\in{l_2(\mathbb{J})} \ : \ G_e\{c_j\}\in\mathcal{D}(\mathcal{E})\}
 \end{equation}
 is an operator 
 acting in $l_2(\mathbb{J})$.
\end{thm}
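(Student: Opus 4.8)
The plan is to use the factorization $H_{{\sf E},e}=\theta_e^*\mathcal{E}\theta_e=\theta_e^*B\theta_e$ from \eqref{xoxo1}, where $B=G_e\mathcal{E}G_e$ acts in $l_2(\mathbb{J})$, together with the fact that $\theta_e:\Hil\to l_2(\mathbb{J})$ is an isometry onto its range $\mathcal{R}(\theta_e)$ and that $G_e=\theta_e\theta_e^*$ is the orthogonal projection of $l_2(\mathbb{J})$ onto $\mathcal{R}(\theta_e)$. The key structural observation is that $B=G_eBG_e$ vanishes identically on $\mathcal{R}(\theta_e)^\perp=\ker G_e$ and maps $\mathcal{R}(\theta_e)$ into itself, so $B$ is block-diagonal with respect to the decomposition $l_2(\mathbb{J})=\mathcal{R}(\theta_e)\oplus\ker G_e$: on the second block it is the zero operator, and on the first block it is unitarily equivalent (via the isometry $\theta_e$) to $H_{{\sf E},e}$. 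From this block decomposition the claim $\sigma(H_{{\sf E},e})\setminus\{0\}=\sigma(B)\setminus\{0\}$ should follow by a routine spectral-mapping argument for direct sums.

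Concretely, first I would establish the unitary equivalence precisely. For $f\in\Hil$ one has $\theta_e f\in\mathcal{D}(B)$ iff $G_e\theta_e f=\theta_e f\in\mathcal{D}(\mathcal{E})$ iff $\{E_j\langle f,e_j\rangle\}\in l_2(\mathbb{J})$, which is exactly the condition \eqref{busa4} defining $\mathcal{D}(H_{{\sf E},e})$; and on this domain $\theta_e H_{{\sf E},e}f=\theta_e\theta_e^*B\theta_e f=G_eB\theta_e f=B\theta_e f$, using $B=G_eB$. Hence $\theta_e$ intertwines $H_{{\sf E},e}$ with the restriction of $B$ to $\mathcal{R}(\theta_e)$, call it $B_1$, and $\theta_e$ is a unitary from $\Hil$ onto $\mathcal{R}(\theta_e)$, so $\sigma(H_{{\sf E},e})=\sigma(B_1)$. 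It is at this point that I would want to know that $B$ really is reduced by $\mathcal{R}(\theta_e)$, i.e.\ that $\mathcal{D}(B)$ splits as $(\mathcal{D}(B)\cap\mathcal{R}(\theta_e))\oplus\ker G_e$ and that $B$ maps each summand into itself; this is immediate from $B=G_eBG_e$ together with the fact that $\ker G_e\subset\mathcal{D}(B)$ (since $G_e$ annihilates it, the membership condition $G_e\{c_j\}\in\mathcal{D}(\mathcal{E})$ is trivially met) and $B\equiv 0$ there.

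Then I would invoke the standard fact that for a densely defined operator which is an orthogonal direct sum $B=B_1\oplus 0$ with respect to a reducing decomposition $\mathcal{K}_1\oplus\mathcal{K}_2$ (here $B_1$ densely defined in $\mathcal{K}_1=\mathcal{R}(\theta_e)$ because $H_{{\sf E},e}$ is densely defined by hypothesis, and the zero operator on $\mathcal{K}_2=\ker G_e$), one has $\sigma(B)=\sigma(B_1)\cup\sigma(0|_{\mathcal{K}_2})$, where $\sigma(0|_{\mathcal{K}_2})=\{0\}$ if $\mathcal{K}_2\neq\{0\}$ and is empty otherwise; in either case $\sigma(B)\setminus\{0\}=\sigma(B_1)\setminus\{0\}$. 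Combining with $\sigma(B_1)=\sigma(H_{{\sf E},e})$ gives $\sigma(H_{{\sf E},e})\setminus\{0\}=\sigma(B)\setminus\{0\}$, as asserted.

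The main obstacle I anticipate is purely technical bookkeeping about unbounded operators: one must check carefully that $B$ with the stated domain \eqref{xx2} is genuinely reduced by the closed subspaces $\mathcal{R}(\theta_e)$ and $\ker G_e$ (in particular that the domain decomposes compatibly), and that "densely defined $H_{{\sf E},e}$" translates to "$B_1$ densely defined in $\mathcal{R}(\theta_e)$" — which it does, since $\theta_e$ is an isometric isomorphism $\Hil\to\mathcal{R}(\theta_e)$ carrying $\mathcal{D}(H_{{\sf E},e})$ onto $\mathcal{D}(B)\cap\mathcal{R}(\theta_e)$. Once the reducing-subspace claim is nailed down, the spectral conclusion is the elementary direct-sum formula. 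The restriction to $\sigma(\cdot)\setminus\{0\}$ is exactly what absorbs the possibly spurious eigenvalue $0$ coming from the trivial block on $\ker G_e$ (which is nonzero precisely when $\F_e$ is not a Riesz basis), so no sharper statement is available in general.
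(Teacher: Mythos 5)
Your proposal is correct and follows essentially the same route as the paper: both decompose $B$ with respect to $l_2(\mathbb{J})=\mathcal{R}(\theta_e)\oplus\ker\theta_e^*$ (note $\ker G_e=\ker\theta_e^*$), identify the restriction $B|_{\mathcal{R}(\theta_e)}$ as isometrically equivalent to $H_{{\sf E},e}$ via the isometry $\theta_e$, and absorb the spurious zero eigenvalue from the trivial block. Your version is in fact slightly more careful than the paper's about verifying that the domain \eqref{xx2} splits compatibly with the reducing decomposition, which is a welcome addition rather than a deviation.
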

\begin{proof} It follows from \eqref{xx2} that $\ker{B}\supseteq\ker\theta_e^*$ and the operator $B$ is decomposed into $B=B|_{\mathcal{R}(\theta_e)}\oplus{0}$ with respect to the decomposition\footnote{$\mathcal{R}(L)$ and
$\ker{L}$, denote the \emph{range} and the \emph{null space} of an operator $L$, respectively. 
} 
$$
l_2(\mathbb{J})=\mathcal{R}(\theta_e)\oplus\ker\theta_e^*.
$$
Here, $B|_{\mathcal{R}(\theta_e)}$ means the restriction of $B$ on $\mathcal{R}(\theta_e)$. It follows from \eqref{busa4} and \eqref{xx2} that
$$
\mathcal{D}(B|_{\mathcal{R}(\theta_e)})=\{\{c_j\}\in\mathcal{R}(\theta_e) \ : \ \{c_j\}\in\mathcal{D}(\mathcal{E})\}=\theta_e\mathcal{D}(H_{{\sf E}, e})
$$
and $\mathcal{D}(B)=\mathcal{D}(B|_{\mathcal{R}(\theta_e)})\oplus\ker\theta_e^*$.
Taking \eqref{xoxo1} into account, we find that
$B|_{\mathcal{R}(\theta_e)}$ is isometrically isomorphic to the operator $H_{{\sf E}, e}$ with domain \eqref{busa4}. This concludes the proof.  \end{proof}

The case of 
$0\in\sigma(H_{{\sf E}, e})$ can also be described, but it requires an additional condition, since zero is always an eigenvalue of 
$B$. The situation in finite-dimensional spaces is particularly simple.
The following result is a direct consequence of Theorem \ref{xx36}.

\begin{cor}\label{xoxo35}  If $\dim\Hil<\infty$, then  $\sigma(H_{{\sf E}, e})\setminus\{0\}$ coincides with the set of non-zero eigenvalues of the matrix $G_e\mathcal{E}G_e$. Furthermore, $0\in\sigma(H_{{\sf E}, e})$ if and only if the eigenvalue $0$ of $G_e\mathcal{E}G_e$ has multiplicity greater than ${\bf e}[\F_e]=|\mathbb{J}|-\dim\Hil$.
\end{cor}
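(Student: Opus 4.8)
The plan is to deduce Corollary \ref{xoxo35} directly from Theorem \ref{xx36} by exploiting the finite dimensionality, which removes all the subtle domain issues. First I would observe that when $\dim\Hil<\infty$, the index set $\mathbb{J}$ is finite, so $l_2(\mathbb{J})=\mathbb{C}^{|\mathbb{J}|}$, the multiplication operator $\mathcal{E}$ is everywhere defined and bounded, and hence the operator $B=G_e\mathcal{E}G_e$ in \eqref{xx2} is just a $|\mathbb{J}|\times|\mathbb{J}|$ matrix defined on all of $l_2(\mathbb{J})$. In particular $H_{{\sf E},e}$ is automatically densely defined (indeed everywhere defined) on the finite-dimensional space $\Hil$, so Theorem \ref{xx36} applies with no extra hypothesis and gives immediately that $\sigma(H_{{\sf E},e})\setminus\{0\}$ equals $\sigma(B)\setminus\{0\}$, i.e.\ the set of non-zero eigenvalues of the matrix $G_e\mathcal{E}G_e$. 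That settles the first assertion.

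For the statement about $0$, I would return to the decomposition established in the proof of Theorem \ref{xx36}, namely $l_2(\mathbb{J})=\mathcal{R}(\theta_e)\oplus\ker\theta_e^*$ and $B=B|_{\mathcal{R}(\theta_e)}\oplus 0$, where $B|_{\mathcal{R}(\theta_e)}$ is (via the isometric isomorphism $\theta_e:\Hil\to\mathcal{R}(\theta_e)$) unitarily equivalent to $H_{{\sf E},e}$ acting on $\Hil$. Now $0\in\sigma(H_{{\sf E},e})$ if and only if $0$ is an eigenvalue of the finite matrix $H_{{\sf E},e}$, equivalently $0$ is an eigenvalue of $B|_{\mathcal{R}(\theta_e)}$. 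The key counting step is that the algebraic (and, since we are free to speak of geometric here, geometric) multiplicity of $0$ as an eigenvalue of the full matrix $B$ splits as
\begin{equation}\label{counting-split}
\dim\ker B=\dim\ker\bigl(B|_{\mathcal{R}(\theta_e)}\bigr)+\dim\ker\theta_e^*,
\end{equation}
because the summand $0$ on $\ker\theta_e^*$ contributes exactly $\dim\ker\theta_e^*$ to the nullity. Since $\theta_e$ is an isometry, $\dim\mathcal{R}(\theta_e)=\dim\Hil$, so $\dim\ker\theta_e^*=|\mathbb{J}|-\dim\Hil={\bf e}[\F_e]$ by Theorem \ref{Naimark} (the excess of a finite Parseval frame equals the codimension of $\Hil$ in $\Hil^+$, which here is $|\mathbb{J}|-\dim\Hil$). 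Therefore $0\in\sigma(H_{{\sf E},e})$ iff $\dim\ker(B|_{\mathcal{R}(\theta_e)})\ge 1$ iff, by \eqref{counting-split}, $\dim\ker B>{\bf e}[\F_e]$; that is, iff the eigenvalue $0$ of $G_e\mathcal{E}G_e$ has multiplicity strictly greater than ${\bf e}[\F_e]=|\mathbb{J}|-\dim\Hil$.

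The only point requiring a little care is the bookkeeping around the null space in \eqref{counting-split}: one must note that the decomposition $B=B|_{\mathcal{R}(\theta_e)}\oplus 0$ is with respect to a direct-sum (not necessarily orthogonal, but in fact it is orthogonal since $\mathcal{R}(\theta_e)^\perp=\ker\theta_e^*$) splitting of $l_2(\mathbb{J})$ into $B$-invariant subspaces, so that nullities simply add. I expect this to be the main (mild) obstacle — making sure the kernel of the block-diagonal operator is exactly the direct sum of the kernels of the blocks, which is immediate once invariance of both subspaces under $B$ is recorded. Everything else is a direct transcription of Theorem \ref{xx36} and the identification $\dim\ker\theta_e^*={\bf e}[\F_e]$ coming from the Naimark picture in Theorem \ref{Naimark}.
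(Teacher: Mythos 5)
Your proof is correct and follows exactly the route the paper intends: the paper states the corollary as a direct consequence of Theorem \ref{xx36}, and your argument simply makes explicit the block decomposition $B=B|_{\mathcal{R}(\theta_e)}\oplus 0$ from that theorem's proof, the additivity of nullities over the two $B$-invariant summands, and the identification $\dim\ker\theta_e^*=|\mathbb{J}|-\dim\Hil={\bf e}[\F_e]$. The only cosmetic improvement would be to note that $B=G_e\mathcal{E}G_e$ is Hermitian, so the algebraic and geometric multiplicities of the eigenvalue $0$ coincide and the word ``multiplicity'' in the statement is unambiguous.
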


\subsection{Examples}\label{sIV.3}

\subsubsection{Mercedes-type Parseval frames}
A frame $\F_\psi=\{\psi_j, j\in\mathbb{J}\}$ is called a \emph{dual frame} for a PF $\F_e=\{e_j, j\in\mathbb{J}\}$ if 
$$
f=\sum_{j\in\mathbb{J}}\langle f, e_j\rangle \psi_j=\sum_{j\in\mathbb{J}}\langle f, \psi_j\rangle e_j, \qquad f\in\Hil.
$$

There are multiple techniques for building dual frames, as described in  \cite{chri, onDF, Can}. 
One of the most frequently used expressions is the formula \cite[p.159]{chri}
$$\F_{\psi} =\left\{\psi_j= e_{j}+h_{j}-\sum_{n\in \mathbb{J}}\langle e_{j},e_{n}\rangle h_{n}\right\},
$$ 
where $\{h_{n}\}_{n\in\mathbb{J}}$ is a Bessel sequence in $\Hil$. Applying this formula to the Mercedes frame $\F_e=\{e_1, e_2, e_3\}$
 considered in Example \ref{e8} shows that all dual frames of 
$\F_e$  can be constructed in an extremely simple way: by adding an arbitrary vector $h\in\Hil=\mathbb{C}^2$  to each vector of $\F_e$. Specifically,
$\F_\phi=\{\phi_j=e_j+h\}_{j=1}^3.$ 
Taking this property as characteristic of the Mercedes frame, we can introduce a generalized version of the Mercedes frame. That is, we say that a Parseval frame $\F_e=\{e_j\}_{j\in\mathbb{J}}$ in $\Hil$  is of \emph{Mercedes type} if at least one of its dual frames has the form 
$\F_\psi=\{\psi_j=e_j+h\}_{j\in\mathbb{J}}$, where $h\not=0\in\Hil$.

A simple  analysis shows that 
$\F_e$ is a Mercedes-type frame if and only if $3\leq|\mathbb{J}|<\infty$, where $k=|\mathbb{J}|$  
and the Gram matrix of $\F_e$ is a $k\times{k}$-matrix of the form 
\begin{equation}
G_e=\frac{1}{k} 
   \left[ \begin{array}{cccc}
    k-1 & -1&\dots& -1\\
   -1& k-1& \dots&-1 \\
   \vdots & &  \ddots\\
    -1& \dots&-1&k-1
 \end{array}\right].
 \label{gramm}
\end{equation}

\begin{prop}\label{uuu1} 
Each Mercedes-type Parseval frame $\F_e=\{e_j\}_{j\in\mathbb{J}}$ has excess ${\bf e}[\F_e]=1$ and can be rescaled to the optimal  Grassmannian frame 
$$
\F_{e'}=\left\{{e_j}'=\sqrt{\frac{k}{k-1}}e_j\right\}_{j\in\mathbb{J}}, \qquad k=|\mathbb{J}|.
$$
\end{prop}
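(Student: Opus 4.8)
The plan is to exploit the explicit form \eqref{gramm} of the Gram matrix $G_e$ of a Mercedes-type Parseval frame and to reduce everything to spectral analysis of the $k\times k$ matrix $M = k\,G_e = (k-1)I - (J - I) = kI - J$, where $J$ is the all-ones matrix. First I would observe that $G_e$ is (as it must be, since $\F_e$ is Parseval) the orthogonal projection $\theta_e\theta_e^*$ onto $\mathcal R(\theta_e)$ inside $l_2(\mathbb J)$, so $\mathrm{rank}\,G_e = \dim\mathcal R(\theta_e) = \dim\Hil$. A direct computation gives the eigenvalues of $J$ as $k$ (once, eigenvector $(1,\dots,1)$) and $0$ (with multiplicity $k-1$), hence the eigenvalues of $G_e = \tfrac1k(kI-J)$ are $0$ (once) and $1$ (with multiplicity $k-1$). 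Therefore $\dim\Hil = \mathrm{rank}\,G_e = k-1$, and the excess is ${\bf e}[\F_e] = |\mathbb J| - \dim\Hil = k - (k-1) = 1$. This is the first half of the claim.

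Next I would turn to the rescaling. Since $\F_e$ is $1$-tight and we rescale by the constant $c = \sqrt{k/(k-1)}$, the family $\F_{e'} = \{e_j' = c\,e_j\}$ is $A$-tight with $A = c^2 = k/(k-1)$; this matches the asserted frame bound $A = |\mathbb J|/(|\mathbb J| - {\bf e}[\F_e]) = k/(k-1)$ from Sect.~\ref{sII.1}, consistent with ${\bf e}[\F_{e'}] = {\bf e}[\F_e] = 1$ (rescaling does not change the excess). It remains to check that $\F_{e'}$ is equiangular, i.e.\ normalized and with constant $|\langle e_i', e_j'\rangle|$ for $i\neq j$. The Gram matrix of $\F_{e'}$ is $G_{e'} = c^2 G_e = \tfrac{k}{k-1}G_e$, whose off-diagonal entries are all equal to $-\tfrac{1}{k-1}$ in modulus and whose diagonal entries equal $c^2\cdot\tfrac{k-1}{k} = 1$; hence $\|e_j'\|=1$ for all $j$ and the coherence constant is $\alpha = \tfrac{1}{k-1}$. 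One then checks this equals the value $\sqrt{{\bf e}[\F_{e'}]/((|\mathbb J|-{\bf e}[\F_{e'}])(|\mathbb J|-1))} = \sqrt{1/((k-1)(k-1))} = \tfrac1{k-1}$ predicted in Sect.~\ref{sII.1}, so $\F_{e'}$ is indeed an optimal Grassmannian frame.

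The only genuinely delicate point is establishing that a Mercedes-type frame forces $G_e$ to have exactly the form \eqref{gramm} with $3\le|\mathbb J|<\infty$ — but this is exactly the "simple analysis" asserted in the paragraph preceding the Proposition, so I would either cite it as already established or sketch it: if $\F_\psi = \{e_j + h\}$ is a dual of the Parseval frame $\F_e$, then plugging into the duality relation $f = \sum_j \langle f, \psi_j\rangle e_j$ and using $f = \sum_j \langle f, e_j\rangle e_j$ gives $\sum_j \langle f, h\rangle e_j = 0$ for all $f$, which (since $h\neq0$) forces $\sum_{j\in\mathbb J} e_j = 0$; in particular $\mathbb J$ is finite (a Parseval frame summing to zero cannot be infinite here under the standing no-zero-vector hypothesis combined with the Mercedes condition — one argues $|\mathbb J|<\infty$ from the Bessel-sequence construction), the frame has excess $1$ because the single linear relation $\sum_j e_j = 0$ is the only dependence, and symmetry considerations pin $G_e$ down to \eqref{gramm}. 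Once \eqref{gramm} is in hand the rest is the elementary eigenvalue computation above, so I do not expect any real obstacle beyond bookkeeping; the main conceptual step is simply recognizing $G_e$ as a rank-one perturbation $\tfrac1k(kI-J)$ of a multiple of the identity.
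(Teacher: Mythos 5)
Your proof is correct, but it reaches the two conclusions in the opposite order from the paper, and by different means. The paper's proof is two lines: it asserts that \eqref{gramm} immediately exhibits $\F_{e'}$ as an optimal Grassmannian frame with frame bound $A=\tfrac{k}{k-1}$, and then \emph{deduces} the excess ${\bf e}[\F_e]=1$ from the general formula $A=\tfrac{|\mathbb{J}|}{|\mathbb{J}|-{\bf e}[\F_{e'}]}$ quoted in Sect.~\ref{sII.1}. You instead compute the excess independently, by diagonalizing $G_e=\tfrac1k(kI-J)$, reading off $\operatorname{rank}G_e=k-1=\dim\Hil$ from the fact that $G_e=\theta_e\theta_e^*$ is the projection onto $\mathcal R(\theta_e)$, and using ${\bf e}[\F_e]=|\mathbb{J}|-\dim\Hil$; and you then verify normalization, tightness and equiangularity of $\F_{e'}$ directly from $G_{e'}=\tfrac{k}{k-1}G_e$. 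Your route is more elementary and more self-contained: it does not lean on the unproved frame-bound formula for optimal Grassmannian frames, and it actually supplies the verification that the paper compresses into ``It follows from \eqref{gramm} that\dots''. What the paper's route buys is brevity and a template that generalizes to other equiangular tight frames where the rank computation is less transparent. Both proofs take the characterization of Mercedes-type frames by the Gram matrix \eqref{gramm} as given, so your reliance on it is not a gap; your closing sketch of that characterization correctly extracts the relation $\sum_j e_j=0$ from the dual-frame condition, though the final step (``symmetry considerations pin $G_e$ down'') is only a gesture --- one still needs that $\ker\theta_e^*$ is exactly the span of $(1,\dots,1)$, i.e.\ that the excess is not larger than $1$, before concluding $G_e=I-\tfrac1kJ$. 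Since that point lies outside the Proposition's proof in the paper as well, it does not count against you here.
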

\begin{proof} It follows from \eqref{gramm} that $\F_{e'}$ is an optimal Grassmannian frame with frame bound $A=\frac{k}{k-1}$. By comparing this relation with formula $A=\frac{|\mathbb{J}|}{|\mathbb{J}|-{\bf e}[\F_{e'}]}$
 in Sect. \ref{sII.1} and using the fact that ${\bf e}[\F_{e'}]={\bf e}[\F_{e}]$, we obtain ${\bf e}[\F_{e}]=1$. 
\end{proof}

The proof of the statement below is given in the Appendix.
\begin{thm}\label{ttt63}
Let $\F_e=\{e_j\}_{j\in\mathbb{J}}$
 denote a Mercedes-type Parseval frame in the Hilbert space 
$\Hil$, and let $H_{{\sf E}, e}$ 
 be the operator on $\Hil$ defined by \eqref{K5}. Then 
$H_{{\sf E}, e}$ has eigenvalues $$
\lambda_1, \lambda_2, \ldots, \lambda_{k-1}, \qquad k=|\mathbb{J}|,
$$
given by the roots of the equation
$
\sum_{i=1}^k \prod_{j\neq i}(E_j - \lambda)=0. 
$
\end{thm}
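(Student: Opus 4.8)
The plan is to compute the spectrum of $H_{{\sf E}, e}$ via the finite-dimensional version of Theorem \ref{xx36}, namely Corollary \ref{xoxo35}: since $\dim\Hil<\infty$ (by Proposition \ref{uuu1}, Mercedes-type frames have $3\leq k=|\mathbb{J}|<\infty$ and excess $1$, so $\dim\Hil=k-1$), the nonzero eigenvalues of $H_{{\sf E}, e}$ coincide with the nonzero eigenvalues of the $k\times k$ matrix $B=G_e\mathcal{E}G_e$, where $G_e$ is the explicit matrix in \eqref{gramm} and $\mathcal{E}=\mathrm{diag}(E_1,\dots,E_k)$. Because the excess is exactly $1$, $B$ has exactly $k-1$ nonzero eigenvalues (generically), and these are the claimed $\lambda_1,\dots,\lambda_{k-1}$. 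So the task reduces to showing that the nonzero eigenvalues of $G_e\mathcal{E}G_e$ are precisely the roots of $\sum_{i=1}^k\prod_{j\neq i}(E_j-\lambda)=0$.

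The key computational observation is the structure $G_e = I - \tfrac{1}{k}J$, where $J$ is the $k\times k$ all-ones matrix; note $G_e$ is the orthogonal projection onto the hyperplane $\{c : \sum_j c_j = 0\}$, which is exactly $\mathcal{R}(\theta_e)$, consistent with $\dim\Hil=k-1$. First I would write $v=\mathbf{1}=(1,\dots,1)^\top$, so $J=vv^\top$ and $G_e = I - \tfrac{1}{k}vv^\top$. Then I would compute the characteristic polynomial of $B=G_e\mathcal{E}G_e$ restricted to $\mathcal{R}(G_e)$. Since $\mathcal{E}$ is diagonal and invertible when all $E_j\neq 0$, one can use the identity $\det(G_e\mathcal{E}G_e - \lambda P) $ where $P=G_e$ is the projection; equivalently, for $\lambda\neq 0$, a vector $c\in\mathcal{R}(G_e)$ is an eigenvector iff $G_e\mathcal{E}G_e c=\lambda c$, i.e. $G_e\mathcal{E}c = \lambda c$ (using $G_e c = c$), i.e. $(\mathcal{E}-\lambda I)c = \tfrac{1}{k}(\mathbf{1}^\top\mathcal{E}c)\mathbf{1}$. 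Writing $\alpha = \tfrac{1}{k}\mathbf{1}^\top\mathcal{E}c$, this gives $c = \alpha(\mathcal{E}-\lambda I)^{-1}\mathbf{1}$ provided $\lambda\notin\{E_1,\dots,E_k\}$, and substituting back into the constraint $\mathbf{1}^\top c = 0$ (the defining condition of $\mathcal{R}(G_e)$) yields $\sum_{i=1}^k \tfrac{1}{E_i-\lambda}=0$, which upon clearing denominators is exactly $\sum_{i=1}^k\prod_{j\neq i}(E_j-\lambda)=0$. I would then separately check that no $E_i$ is itself an eigenvalue on $\mathcal{R}(G_e)$ in the generic case (and handle the degenerate coincidences by a continuity/limiting argument, since the polynomial identity is continuous in the $E_j$), and that the zero eigenvalue contributes only the one-dimensional kernel coming from the excess, not an extra $\lambda$.

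The main obstacle I anticipate is bookkeeping around degenerate cases: (a) when some $E_i=0$, so $\mathcal{E}$ is not invertible, in which case the reduction $G_e\mathcal{E}c=\lambda c$ still works for $\lambda\neq0$ but one must argue $0$ is not overcounted — here Corollary \ref{xoxo35}'s criterion ($0\in\sigma(H_{{\sf E},e})$ iff the multiplicity of eigenvalue $0$ of $G_e\mathcal{E}G_e$ exceeds ${\bf e}[\F_e]=1$) is the right tool; (b) when the $E_j$ are not distinct, so that $(\mathcal{E}-\lambda I)^{-1}\mathbf{1}$ is singular at a repeated value — this is handled by noting the equation $\sum_i\prod_{j\neq i}(E_j-\lambda)=0$ is a polynomial identity of degree $k-1$ whose roots depend continuously on $(E_1,\dots,E_k)$, and likewise the eigenvalues of $B$ depend continuously, so it suffices to establish the equality on the dense open set of distinct nonzero $E_j$ and pass to the limit. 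A secondary, more mundane point is verifying that $\deg\big(\sum_{i=1}^k\prod_{j\neq i}(E_j-\lambda)\big)=k-1$ (the leading terms $(-\lambda)^{k-1}$ sum to $k(-\lambda)^{k-1}\neq0$), so it has exactly $k-1$ roots counted with multiplicity, matching $\dim\Hil=k-1$ and confirming that every eigenvalue of $H_{{\sf E},e}$ is accounted for.
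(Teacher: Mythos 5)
Your proposal is correct and reaches the paper's equation, but the linear-algebra core is different. Both arguments start the same way: reduce to the matrix $B=G_e\mathcal{E}G_e$ on $l_2(\mathbb{J})=\mathbb{C}^k$ via Corollary \ref{xoxo35}, and exploit the explicit form \eqref{gramm}, i.e. $G_e=I-\tfrac{1}{k}\mathbf{1}\mathbf{1}^\top$. The paper then diagonalizes $G_e$ by explicit block matrices $M,M^{-1},D$, forms $C=DM\mathcal{E}M^{-1}D$, and evaluates $\det(C-\lambda I)$ by a Laplace expansion with explicitly computed minors, obtaining $\det(C-\lambda I)=-\tfrac{\lambda}{k}\sum_{i=1}^k\prod_{j\neq i}(E_j-\lambda)$ as a polynomial identity valid for arbitrary (possibly repeated or zero) $E_j$ in one stroke. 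You instead treat $B|_{\mathcal{R}(G_e)}$ as a projection-compressed diagonal matrix and derive the secular equation $\sum_i(E_i-\lambda)^{-1}=0$ from the rank-one structure, clearing denominators to get the same polynomial. Your route is shorter and more conceptual, and the consistency check (that every root of the secular equation really is an eigenvalue, via $c=(\mathcal{E}-\lambda I)^{-1}\mathbf{1}$, which automatically satisfies $\mathbf{1}^\top c=0$) is routine even though you only sketch it; the price is that you must handle repeated or vanishing $E_j$ by the continuity/polynomial-identity argument you describe, and you must separately invoke the excess-$1$ criterion of Corollary \ref{xoxo35} to account for a possible zero root --- both points you correctly flag. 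The paper's determinant computation avoids these case distinctions at the cost of a heavier explicit calculation. Either way the degree count $\deg=k-1=\dim\Hil$ closes the argument, so your plan is sound.
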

\begin{example}
For the Mercedes frame $\F_e=\{e_1, e_2, e_3\}$ in $\Hil=\mathbb{C}^2$ considered in Example \ref{e8}, $k=3$, and the roots $\lambda_1, \lambda_2$ of the equation
$$
(E_1-\lambda)(E_2-\lambda)+(E_1-\lambda)(E_3-\lambda)+(E_2-\lambda)(E_3-\lambda)=0
$$
are eigenvalues of  $H_{{\sf E, e}}=\sum_{j=1}^3E_j \langle \cdot,  e_j \rangle{e}_j$. 
\end{example}

\subsubsection{Parseval frames  arising from conference matrices}
Another class of Parseval frames related to optimal Grassmannian frames can be constructed using the so-called conference matrices. A conference matrix 
$C_N$ is a 
$N\times N$ matrix with entries in $\{-1,0,1\}$ satisfying  $C_N^TC_N = (N-1)I_N$. 
Conference matrices can be viewed as Seidel adjacency matrices of graphs and were first introduced in connection with a problem in telephony \cite{BV}.  
They also belong to the broader class of weighing matrices which arise, for example, in statistics \cite{OD}.

It turns out that, given a conference matrix $C_N$ 
 with 
$N=2m$, one can explicitly construct an optimal Grassmannian frame 	
for $\mathbb{R}^m$. The proof of the result below is given in \cite{SH}. \begin{thm}\label{ttt1}
     If $N=2m = p^l+1$ for some prime $p$ and positive integer $l$, then an optimal Grassmannian frame $\mathcal{F}_{e'}=\{e'_j\}_{j\in\mathbb{J}}$ for $\mathbb{R}^m$ with excess $\boldsymbol{e}[\mathcal{F}_{e'}] = m$ and $|\mathbb{J}|=N$ can be explicitly constructed.
\end{thm}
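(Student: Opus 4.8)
The plan is to realize $\F_{e'}$ as a rescaled system of columns of an orthogonal projection of rank $m$ in $\mathbb{R}^N$, the projection being built directly from a symmetric conference matrix of order $N$. The one external ingredient is the existence of such a matrix: by the classical Paley quadratic-residue construction, for an odd prime power $q=p^l$ the $(q+1)\times(q+1)$ matrix obtained by bordering the quadratic-character matrix of $\mathbb{F}_q$ with a row and column of ones is a conference matrix $C_N$ of order $N=q+1$, and it is \emph{symmetric} precisely when $q\equiv 1\pmod 4$. In that case put $C:=C_N$, so that $C^{T}=C$, $C_{jj}=0$, $C_{jk}\in\{-1,1\}$ for $j\neq k$, and $C^{2}=C^{T}C=(N-1)I_N$.

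Set $P=\tfrac12\bigl(I_N+\tfrac{1}{\sqrt{N-1}}\,C\bigr)$. Using $C^{2}=(N-1)I_N$ one checks at once that $P^{2}=P$, and $P^{T}=P$ since $C$ is symmetric, so $P$ is an orthogonal projection in $\mathbb{R}^N$; as $C$ has zero diagonal, $\operatorname{tr}P=\tfrac12(N+\operatorname{tr}C)=\tfrac{N}{2}=m$, hence $\operatorname{rank}P=m$. Identify $\mathcal{R}(P)$ with $\mathbb{R}^m$, let $\{\varepsilon_j\}_{j=1}^N$ be the canonical basis of $\mathbb{R}^N$, and set $e'_j=\sqrt{2}\,P\varepsilon_j$. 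By the inverse Naimark statement following Theorem \ref{Naimark}, $\{P\varepsilon_j\}_{j=1}^N$ is a Parseval frame for $\mathcal{R}(P)=\mathbb{R}^m$, so $\F_{e'}=\{e'_j\}$ is a $2$-tight frame. Its Gram entries are $\langle e'_j,e'_k\rangle=2\langle P\varepsilon_j,P\varepsilon_k\rangle=2P_{jk}$; hence $\|e'_j\|^{2}=2P_{jj}=1$, so $\F_{e'}$ is normalized, and for $j\neq k$ we get $|\langle e'_j,e'_k\rangle|=2|P_{jk}|=\tfrac{|C_{jk}|}{\sqrt{N-1}}=\tfrac{1}{\sqrt{2m-1}}$, independently of $j,k$. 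Thus $\F_{e'}$ is an equiangular $2$-tight frame, i.e. an optimal Grassmannian frame, with coherence constant $\alpha=1/\sqrt{2m-1}$.

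To fix the parameters, $|\mathbb{J}|=N$ because $\F_{e'}$ has $N$ elements. The complement $\M=\mathcal{R}(I_N-P)$ of the dilation $\{\varepsilon_j\}$ of the Parseval frame $\{P\varepsilon_j\}$ satisfies $\dim\M=N-m=m$, so by Theorem \ref{Naimark} the Parseval frame $\{P\varepsilon_j\}$ has excess $m$; rescaling all vectors by the constant $\sqrt2$ leaves the excess unchanged, hence ${\bf e}[\F_{e'}]=m$. (Equivalently, comparing the bound $A=2$ of $\F_{e'}$ with $A=|\mathbb{J}|/(|\mathbb{J}|-{\bf e}[\F_{e'}])$ from Sect. \ref{sII.1} gives ${\bf e}[\F_{e'}]=m$, and the value of $\alpha$ above then matches $\sqrt{{\bf e}[\F_{e'}]/\bigl((|\mathbb{J}|-{\bf e}[\F_{e'}])(|\mathbb{J}|-1)\bigr)}$ as it must.) This proves the theorem when $p^l\equiv 1\pmod 4$.

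I expect the genuine difficulty to be the parity condition rather than the linear algebra above. The projection $P$ is real and symmetric only when the conference matrix is symmetric, which by Paley's construction forces $p^l\equiv 1\pmod 4$; for $p^l\equiv 3\pmod 4$ the analogous matrix is skew-symmetric, $C^{2}=-(N-1)I_N$, and the natural substitute $\tfrac12(I_N+\tfrac{i}{\sqrt{N-1}}C)$ is Hermitian, yielding a \emph{complex} equiangular tight frame rather than one for $\mathbb{R}^m$ — and indeed the existence constraints recalled in Sect. \ref{sII.1} already show a real optimal Grassmannian frame need not exist in this regime (e.g. $N=4$, $m=2$). So the assertion is really about $N\equiv 2\pmod 4$, and making this precise is the point that needs care; the only other non-routine ingredient is the classical character-sum identity $\sum_{x\in\mathbb{F}_q}\chi(x)\chi(x+c)=-1$ for $c\neq 0$ (with $\chi$ the quadratic character) that certifies $C_N$ is genuinely a conference matrix. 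A complete treatment of both points is carried out in \cite{SH}.
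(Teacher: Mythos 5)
The paper itself does not prove Theorem \ref{ttt1}; it defers entirely to \cite{SH}, so there is no internal proof to compare against. Your construction --- forming the orthogonal projection $P=\tfrac12\bigl(I_N+(N-1)^{-1/2}C_N\bigr)$ from a symmetric Paley conference matrix, taking $e_j'=\sqrt2\,P\varepsilon_j$, and reading off tightness, normalization, equiangularity with $\alpha=1/\sqrt{N-1}$, and excess $m$ from the Gram entries $2P_{jk}$ --- is precisely the standard argument underlying the cited result, and every step of your verification is correct. The more valuable part of your write-up is the caveat at the end, which is not a side remark but an actual correction to the statement: the result in \cite{SH} carries the hypothesis $N\equiv 2\pmod 4$ (equivalently $p^l\equiv 1\pmod 4$), which is exactly what makes the Paley conference matrix symmetric rather than skew-symmetric, and Theorem \ref{ttt1} as printed has dropped it. Without that hypothesis the statement is false: $p=3$, $l=1$ gives $N=4$, $m=2$, yet $\mathbb{R}^2$ admits at most three equiangular lines, so no equiangular tight frame of four vectors for $\mathbb{R}^2$ exists (the skew case only yields a \emph{complex} equiangular tight frame, as you note). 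The worked example in the paper ($p=5$, $N=6$, $m=3$) satisfies the missing condition, so nothing downstream of the theorem is affected, but the hypothesis $p^l\equiv 1\pmod 4$ should be restored in the statement.
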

In case of such frames we have $\alpha=|\langle e'_j,e'_i\rangle| = \frac{1}{\sqrt{N-1}}$ (see Sect. \ref{sII.1}) and the Gram matrix $G_{e'}$ for $\mathcal{F}_{e'}$ can be derived from the conference matrix $C_N$ by setting $G_{e'} = \alpha C_N + I$. 

Consider the case $p=5$ and $l=1$. 
By Theorem~\ref{ttt1}, in 
$\Hil=\mathbb{R}^3$
 there exists an optimal Grassmannian frame $\F_{e'}=\{e'\}_{j=1}^6$
 with coherence constant $\alpha = \frac{1}{\sqrt{5}}$. 
This frame
 can be constructed by normalizing the vectors $f_j$, that is, setting 
$e'_j = \frac{1}{\|f_j\|} f_j$,
where
$$
\begin{array}{ccc}
f_1  = (\beta, 1, 0), & f_2 = (\beta, -1, 0), & f_3 = (1, 0, \beta) \\
f_4 = (-1, 0, \beta), & f_5 = (0, \beta, 1), & f_6 = (0, \beta, -1), 
 \end{array}
 $$
 and $\beta = \frac{1 + \sqrt{5}}{2}$. 
 The optimal Grassmannian frame $\F_{e'}$
 has a frame bound 
$A=2$ (Sect. \ref{sII.1}). Consequently, the associated Parseval frame $\F_{e}=\{e_j\}_{j=1}^6$ 
 is obtained by rescaling 
$e_j:=\frac{1}{\sqrt{2}}e_j'$. 
The Gram matrix of $\mathcal{F}_e$ has the form of 
$$G_e = \frac{1}{2\sqrt{5}}   \begin{bmatrix}
    \sqrt{5} & 1 & 1 & -1 & 1 &1 \\
    1&\sqrt{5}&1&-1&-1&-1 \\
    1&1&\sqrt{5}&1&1&-1 \\
    -1&-1&1&\sqrt{5}&1&-1 \\
    1&-1&1&1&\sqrt{5}&1 \\
    1&-1&-1&-1&1&\sqrt{5}
\end{bmatrix}.$$
In the following, we present a result analogous to that obtained for Mercedes-type Parseval frames in Section~\ref{sIV.3}; its proof is deferred to the appendix.
\begin{thm}\label{ttt64}
    Let $\mathcal{F}_e$ denote the Parseval frame in $\mathbb{R}^3$ defined above, and let $H_{{\sf E}, e}$ 
 be the operator defined by \eqref{K5} in $\mathbb{R}^3$. Then eigenvalues of $H_{{\sf E}, e}$ satisfy the equation
 $$\sum_{\sigma \in S_3} \text{sgn}(\sigma)\prod_{i=1}^3 W_{i, \sigma(i)}(\lambda) = 0,$$
 where  
 $$W_{i,j}(\lambda) = \left\{ \begin{aligned}
        E_j - E_{3+i} \quad &\text{for} \quad i+j=4 \\
        \beta E_j - \bar{\beta}E_{3+i} -\frac{\lambda}{\alpha} \quad &\text{for} \quad i+j\neq 4
    \end{aligned} \right. $$
and $\bar{\beta} = \frac{1 - \sqrt{5}}{2}$.
\end{thm}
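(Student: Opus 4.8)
The plan is to use Corollary \ref{xoxo35}, which reduces the computation of $\sigma(H_{{\sf E},e})\setminus\{0\}$ to the non-zero eigenvalues of the $6\times 6$ matrix $G_e\mathcal{E}G_e$, where $\mathcal{E}=\mathrm{diag}(E_1,\dots,E_6)$. Since $\dim\Hil=3$ and $|\mathbb{J}|=6$, the excess is ${\bf e}[\F_e]=3$; the matrix $G_e\mathcal{E}G_e$ therefore has rank at most $3$, so generically it has exactly three non-zero eigenvalues together with the eigenvalue $0$ of multiplicity $3$. Consequently the non-zero spectrum of $H_{{\sf E},e}$ coincides with the three non-zero roots of $\det(G_e\mathcal{E}G_e-\lambda I_6)=0$. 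The first step is thus to set up this determinant and exploit the explicit block structure visible in $G_e=\alpha C_6+I_6$ together with the pattern of the vectors $f_1,\dots,f_6$ (note the pairing $f_1\leftrightarrow f_2$, $f_3\leftrightarrow f_4$, $f_5\leftrightarrow f_6$, each pair differing only in a sign and carrying the golden-ratio weight $\beta$).

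The key algebraic simplification is that $G_e\mathcal{E}G_e$ and $\mathcal{E}^{1/2}G_e^2\mathcal{E}^{1/2}$ have the same non-zero spectrum, but more usefully: since $G_e=\theta_e\theta_e^*$ is the orthogonal projection onto $\mathcal{R}(\theta_e)$, one has $G_e=\theta_e\theta_e^*$ with $\theta_e^*\theta_e=I_3$, so the non-zero eigenvalues of $G_e\mathcal{E}G_e=\theta_e\theta_e^*\mathcal{E}\theta_e\theta_e^*$ coincide with those of $\theta_e^*\mathcal{E}\theta_e=H_{{\sf E},e}$ acting on the three-dimensional space $\Hil=\mathbb{R}^3$. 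I would therefore instead work directly with the $3\times 3$ matrix $\theta_e^*\mathcal{E}\theta_e=\sum_{j=1}^6 E_j\langle\cdot,e_j\rangle e_j$, i.e. $\sum_j E_j\,\frac{1}{2\|f_j\|^2}f_jf_j^T$ (the factor $\tfrac12$ from the rescaling $e_j=\tfrac{1}{\sqrt2}e_j'$ and $\|f_j\|^2=1+\beta^2$ being common to all $j$). Writing out $\det\!\bigl(\sum_j E_j\langle\cdot,e_j\rangle e_j-\lambda I_3\bigr)=0$ and expanding via the Cauchy–Binet/Leibniz formula, each term is indexed by a choice of three of the six columns; the algebra should collapse, after grouping the paired vectors, into a $3\times 3$ determinant of the matrix $\bigl(W_{i,j}(\lambda)\bigr)_{i,j=1}^3$ with the stated entries — the two cases $i+j=4$ versus $i+j\neq 4$ reflecting whether the pair of indices $(j,3+i)$ sits in a "conjugate" position (where the $\beta$-contributions cancel to give $E_j-E_{3+i}$) or a "generic" position (where one gets $\beta E_j-\bar\beta E_{3+i}$ shifted by $\lambda/\alpha$, using $\beta\bar\beta=-1$, $\beta+\bar\beta=1$, $\alpha=1/\sqrt5$).

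The main obstacle I anticipate is the bookkeeping in that expansion: identifying precisely how the six frame vectors pair up so that the $6\times 6$ problem factors through a $3\times 3$ determinant with the clean entries $W_{i,j}(\lambda)$, and verifying that the normalization constants ($\tfrac{1}{2\sqrt5}$, $1+\beta^2$, $\alpha$) combine exactly into the single factor $\lambda/\alpha$ appearing in the off-diagonal case. I would handle this by choosing an orthonormal basis of $\mathbb{R}^3$ adapted to the symmetry of the conference-matrix construction (so that the three "conjugate pairs" correspond to the three coordinate directions, as the vectors $f_j$ above already suggest), compute $\theta_e^*\mathcal{E}\theta_e$ in that basis, and read off the characteristic polynomial; the identity $\det(W_{i,j}(\lambda))=\sum_{\sigma\in S_3}\mathrm{sgn}(\sigma)\prod_i W_{i,\sigma(i)}(\lambda)$ is then just the Leibniz formula. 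Finally I would confirm via Corollary \ref{xoxo35} that no spurious factor of $\lambda$ is lost or gained — i.e. that the eigenvalue $0$ of $G_e\mathcal{E}G_e$ has multiplicity exactly ${\bf e}[\F_e]=3$ unless the generic position degenerates — so that the displayed equation indeed captures all eigenvalues of $H_{{\sf E},e}$ on $\mathbb{R}^3$.
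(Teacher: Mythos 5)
Your reduction is sound as far as it goes: since $\theta_e^*\theta_e=I_3$, the nonzero spectrum of $G_e\mathcal{E}G_e=\theta_e\theta_e^*\mathcal{E}\theta_e\theta_e^*$ does coincide with that of $H_{{\sf E},e}=\theta_e^*\mathcal{E}\theta_e$, and for your route you do not even need Corollary \ref{xoxo35}: you can simply compute $\det(H_{{\sf E},e}-\lambda I_3)$ for the explicit $3\times 3$ symmetric matrix $H_{{\sf E},e}=\frac{1}{2\sqrt{5}\,\beta}\sum_{j}E_j f_jf_j^T$ (using $\|f_j\|^2=1+\beta^2=\sqrt{5}\,\beta$), and every eigenvalue, zero or not, is a root. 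The genuine gap is exactly the step you flag as ``the algebra should collapse'': the whole content of the theorem is that this characteristic polynomial is proportional to $\det\bigl(W_{i,j}(\lambda)\bigr)$, and you give no mechanism for producing the non-symmetric matrix $W$ --- whose entries couple $E_j$ ($j\le 3$) with $E_{3+i}$ through the conjugate weights $\beta,\bar\beta$ --- from the symmetric matrix $\sum_j E_jf_jf_j^T$, whose entries involve only the combinations $E_{2k-1}+E_{2k}$ and $\beta(E_{2k-1}-E_{2k})$. Note also that the pairing you propose to exploit (the sign pairs $(f_1,f_2)$, $(f_3,f_4)$, $(f_5,f_6)$, i.e.\ $(E_1,E_2),(E_3,E_4),(E_5,E_6)$) is \emph{not} the pairing that appears in $W$, which matches the first three indices against the last three ($E_1$ with $E_4$, $E_2$ with $E_5$, $E_3$ with $E_6$); this mismatch is unresolved in your plan.

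What actually makes the identity drop out in the paper is a block factorization of the Gram matrix, $G_e=M^{-1}DM$ with $M=\bigl[\begin{smallmatrix} A & I\\ \bar A & I\end{smallmatrix}\bigr]$, $D=\mathrm{diag}(I_3,0)$ and $U=(A-\bar A)^{-1}$, where $A,\bar A$ are explicit $3\times 3$ matrices built from $\beta$ and $\bar\beta$. Then $G_e\mathcal{E}G_e$ is similar to $\mathrm{diag}\bigl((A\mathcal{E}_1-\mathcal{E}_2\bar A)U,\,0\bigr)$ with $\mathcal{E}_1=\mathrm{diag}(E_1,E_2,E_3)$, $\mathcal{E}_2=\mathrm{diag}(E_4,E_5,E_6)$, and $\det\bigl((A\mathcal{E}_1-\mathcal{E}_2\bar A)U-\lambda I\bigr)=\det\bigl(A\mathcal{E}_1-\mathcal{E}_2\bar A-\lambda(A-\bar A)\bigr)\det U$; the matrix inside the last determinant is, up to row sign changes, exactly $\bigl(W_{i,j}(\lambda)\bigr)$. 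Without this (or an equivalent) factorization that puts $\mathcal{E}_1$ to the right of $A$ and $\mathcal{E}_2$ to the left of $\bar A$, your computation establishes only that the eigenvalues satisfy \emph{some} cubic equation, not the displayed one; so the proposal as written does not prove the theorem.
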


\section{Appendix}
\subsection{The proof of Theorem \ref{ttt63}}
Since 
$\F_e=\{e_j\}_{j\in\mathbb{J}}$
 is a Mercedes-type frame, its Gram matrix
 has the special form given in \eqref{gramm}. Denote by $\{\delta_i\}_{i=1}^k$  the canonical basis in $\mathbb{C}^k$.  An analysis of \eqref{gramm} shows that 
$G_e$	has eigenvectors 
$$
v_1,\dots v_{k-1}, \qquad v_i= \delta_1 - \delta_k, \qquad k=|\mathbb{J}|
$$ associated with eigenvalue $\lambda_1 = 1$ and one eigenvector $v_k = (1,1,\dots,1)^T$ associated with eigenvalue $\lambda_2 = 0$. Hence $G_e$ have the form $G_e = M^{-1}DM$, where the matrices $M, M^{-1}$ and $D$ admit the following block representation:
$$
M^{-1}=\frac{1}{k}\begin{bmatrix}
    \boldsymbol{1}_{1\times k-1} & 1 \\
    \boldsymbol{1}_{k-1} - kI_{k-1}& \boldsymbol{1}_{k-1\times 1}
\end{bmatrix}, \: 
D=\begin{bmatrix}
    I_{k-1} & \boldsymbol{0}_{k-1\times1} \\
    \boldsymbol{0}_{1\times k-1} & 0
\end{bmatrix},  \:
M=\begin{bmatrix}
    \boldsymbol{1}_{k-1\times 1} & -I_{k-1} \\
    1& \boldsymbol{1}_{1\times k-1}
\end{bmatrix},
$$
where $\boldsymbol{1}$ and $\boldsymbol{0}$ denote matrices whose dimensions are indicated by subscripts, with all entries of $\boldsymbol{1}$
 equal to $1$ and all entries of $\boldsymbol{0}$
 equal to $0$ (zero matrix).

By Corollary \ref{xoxo35}, the spectrum of $H_{{\sf E}, e}$ is completely determined by that  of the operator $B =G_e \mathcal{E} G_e$ defined in \eqref{xx2} and acting on $l_2(\mathbb{J})=\mathbb{C}^k$.
Using the decomposition of $G_e$ we get
$$
B =G_e \mathcal{E} G_e=M^{-1} D M \mathcal{E}M^{-1}DM=M^{-1}CM.$$
This means that $\sigma(B)=\sigma(C)$, where 
$$C =D M \mathcal{E}M^{-1}D = \frac{1}{k}\begin{bmatrix}
    E_1 - E_2 & \cdots & E_1-E_2 & 0 \\
    \vdots & \ddots & \vdots & \vdots \\
    E_1 - E_k & \cdots & E_1-E_k & 0 \\
    0 & 0& 0 & 0
\end{bmatrix} + \text{diag}\{E_2,E_3, \dots, E_k, 0\}.$$

Consider the determinant $\det (C-\lambda I)$. By reducing the last diagonal element by $-\lambda$, subtracting the first column from columns $2,3,\dots,k-1$, and introducing the notation $P_i = P_i(\lambda):= E_i - \lambda$, 
we obtain
\begin{equation}\label{bro1}
\det (C-\lambda I) =-\lambda\det\begin{bmatrix}
    \frac{P_1}{k} - \frac{P_2}{k} + P_2 & -P_2 & -P_2 &\cdots & -P_2 \\
    \frac{P_1}{k} - \frac{P_3}{k} & P_3 & 0 &\cdots & 0\\
    \frac{P_1}{k} - \frac{P_4}{k} & 0 & P_4 &\cdots & 0\\
    \vdots & \vdots & \vdots &\ddots & 0 \\
    \frac{P_1}{k} - \frac{P_k}{k}& 0 & 0 &\cdots & P_k
\end{bmatrix}.
\end{equation}
Therefore, the spectrum of 
$C$ contains 
$0$ as an eigenvalue, along with additional eigenvalues $\lambda_1, \lambda_2, \ldots, \lambda_{k-1}$ that remain to be determined and for which the determinant of the above matrix must vanish.
Applying the Laplace expansion along the first column of the matrix in \eqref{bro1}, we obtain
\begin{equation}
    P_2 M_{1,1}  + \sum_{i=1}^{k-1}(-1)^{i+1}\left( \frac{P_1}{k} - \frac{P_{i+1}}{k} \right)M_{i,1} = 0,
    \label{eigseq}
\end{equation}
where $M_{i,1}$ are the corresponding minors. A straightforward calculation shows that
$$
M_{i,1} = (-1)^{i-1}\frac{1}{P_{i+1}}\prod_{j=2}^kP_j.$$
Substituting that into \eqref{eigseq} we get
\begin{align*}
    0 &= P_2 M_{1,1}  + \sum_{i=1}^{k-1}(-1)^{i+1}\left( \frac{P_1}{k} - \frac{P_{i+1}}{k} \right)M_{i,1} = \prod_{j=2}^kP_j + \sum_{i=1}^{k-1}\left( \frac{P_1}{k} - \frac{P_{i+1}}{k} \right)\frac{1}{P_{i+1}}\prod_{j=2}^kP_j \\
    &= \prod_{j=2}^kP_j\left[1+\frac{1}{k}\sum_{i=1}^{k-1}\frac{P_1}{P_{i+1}}-\frac{k-1}{k}\right]  =  \frac{1}{k}\prod_{j=2}^kP_j\sum_{i=1}^k\frac{P_1}{P_i}=\frac{1}{k}\sum_{i=1}^{k}\prod_{j\neq i}^kP_j=\frac{1}{k}\sum_{i=1}^{k}\prod_{j\neq i}^k(E_j-\lambda).
\end{align*}
Applying Corollary \ref{xoxo35} and taking into account that ${\bf e}[\F_e]=1$
(see Remark \ref{uuu1}), we complete the proof.

\subsection{The proof of Theorem \ref{ttt64}}
\begin{proof}
Recall that $\beta = \frac{1 +\sqrt{5}}{2}$, $\bar{\beta} = \frac{1 -\sqrt{5}}{2}$ and $\alpha = \frac{1}{\sqrt{5}}$.  Given matrices 
$$A = \begin{bmatrix}
        -\beta & -\beta & -1 \\
        \beta & 1 & \beta \\
        -1 & -\beta & -\beta \\
    \end{bmatrix}, \qquad \qquad \bar{A} = \begin{bmatrix}
        -\bar{\beta} & -\bar{\beta} & -1 \\
        \bar{\beta} & 1 & \bar{\beta} \\
        -1 & -\bar{\beta} & -\bar{\beta} \\
    \end{bmatrix},$$
$$U = (A-\bar{A})^{-1} = \frac{1}{2}\begin{bmatrix}
    -\alpha & \alpha & \alpha \\
    -\alpha & -\alpha &-\alpha \\
    \alpha & \alpha & -\alpha
\end{bmatrix}.$$
We have the following representation of the Gram matrix $G_e$ using block matrices

\begin{equation}\label{new14} G_e = M^{-1}DM = \begin{bmatrix}
    U & -U \\
    -\bar{A}U & I+\bar{A}U
\end{bmatrix} \cdot \begin{bmatrix}
    I & 0 \\
    0 & 0
\end{bmatrix} \cdot \begin{bmatrix}
    A & I \\
    \bar{A} & I
\end{bmatrix}.
\end{equation}

By Corollary \ref{xoxo35}, the eigenvalues of $H_{{\sf E}, e}$ are   determined by that  of the operator $G_e \mathcal{E} G_e$ acting on $l_2(\mathbb{J})=\mathbb{C}^6$. Using \eqref{new14} we see that the eigenvalues of $G_e \mathcal{E} G_e$ are the same as of the matrix $C= D M \mathcal{E}M^{-1}D$. Splitting $\mathcal{E}$ into $\mathcal{E}_1 = \text{diag}\{E_1,E_2,E_3\}$ and $\mathcal{E}_2 = \text{diag}\{E_4,E_5,E_6\}$, we get
\begin{align*}
    C &= D M \mathcal{E}M^{-1}D = \\ 
    &=\begin{bmatrix}
    I & 0 \\
    0 & 0
\end{bmatrix}\cdot \begin{bmatrix}
    A & I \\
    \bar{A} & I
\end{bmatrix} \cdot \begin{bmatrix}
    \mathcal{E}_1 & 0 \\
    0 & \mathcal{E}_2
\end{bmatrix}\cdot \begin{bmatrix}
    U & -U \\
    -\bar{A}U & I+\bar{A}U
\end{bmatrix} \cdot \begin{bmatrix}
    I & 0 \\
    0 & 0
\end{bmatrix} = \\
&= \begin{bmatrix}
    (A\mathcal{E}_1 - \mathcal{E}_2\bar{A})U & 0 \\
    0 & 0
\end{bmatrix}.
\end{align*}
This means that the eigenvalues of $H_{{\sf E}, e}$ correspond to the eigenvalues of $(A\mathcal{E}_1 - \mathcal{E}_2\bar{A})U$.  
Calculating $\det{((A\mathcal{E}_1 - \mathcal{E}_2\bar{A})U - \lambda I)}$, we get
\begin{align*}
    \det{((A\mathcal{E}_1 - \mathcal{E}_2\bar{A})U - \lambda I)} &= \det{(A\mathcal{E}_1 - \mathcal{E}_2\bar{A} - \lambda U^{-1})U} = \\ 
    &= \det{(A\mathcal{E}_1 - \mathcal{E}_2\bar{A} - \lambda (A - \bar{A}))} \det U,
\end{align*}
where $\det U \neq 0$ and
$$A\mathcal{E}_1 - \mathcal{E}_2\bar{A} - \lambda (A - \bar{A}) = \begin{bmatrix}
    -\beta E_1 + \bar{\beta}E_4 & -\beta E_2 + \bar{\beta} E_4 & -E_3+E_4 \\
        \beta E_1 - \bar{\beta}E_5& E_2 - E_5 & \beta E_3 - \bar{\beta}E_5 \\
        -E_1 +E_6 & -\beta E_2 + \bar{\beta} E_6 & -\beta E_3 +\bar{\beta}E_6 \\
\end{bmatrix} - \frac{\lambda}{\alpha} \begin{bmatrix}
     -1 & -1 & 0\\
     1& 0& 1\\
     0& -1& -1\\
\end{bmatrix}.$$
By changing the signs of the first and last rows and using the definition of the determinant, we obtain
$$\det{(A\mathcal{E}_1 + \mathcal{E}_2\bar{A} - \lambda (A - \bar{A}))} = \sum_{\sigma \in S_3} \text{sgn}(\sigma)\prod_{i=1}^3 W_{i, \sigma(i)}(\lambda) = 0,$$
where
 $$W_{i,j}(\lambda) = \left\{ \begin{aligned}
        E_j - E_{3+i} \quad &\text{for} \; i+j=4 \\
        \beta E_j - \bar{\beta}E_{3+i} -\frac{\lambda}{\alpha} \quad &\text{for} \; i+j\neq 4
    \end{aligned} \right. $$
\end{proof}

\section*{Acknowledgments} { This work was partially supported by the AGH UST Faculty of Applied Mathematics statutory tasks within the subvention of 
the Ministry of Science and Higher Education of Poland.
SK gratefully acknowledges Dr. Roberto Beneduci for his valuable and constructive discussions on the theory of POVMs during his visit to AGH University.}

\end{document}